\let\epsilon=\varepsilon
\newcommand*{\Nset}{\mathbb{N}}  
\renewcommand*{\ker}{\operatorname{Ker}}
\newcommand*{\coim}{\operatorname{Coim}}
\newcommand*{\ol}{\overline}
\newcommand*{\id}{\operatorname{Id}}
\newcommand*{\der}{\mathrm{d}}
\newcommand*{\X}{\mathfrak{X}}
\DeclareMathOperator{\curl}{\operatorname{curl}}
\DeclareMathOperator{\Sol}{\operatorname{Sol}}
\newcommand*{\im}{\mathrm{Im}}
\newcommand*{\abs}[1]{\left\lvert#1\right\rvert}   
\newcommand*{\norm}[1]{\left\lVert#1\right\rVert}  
\newcommand*{\sis}[1]{\left\langle#1\right\rangle}  
\newcommand*{\sol}[1]{#1^\mathrm{s}}
\newcommand{\C}{\mathbb C}
\newcommand{\R}{\mathbb R}
\def\blfootnote{\gdef\@thefnmark{}\@footnotetext}
\newtheorem{thm}{Theorem}[section]
\newtheorem{prop}[thm]{Proposition}
\newtheorem{lem}[thm]{Lemma}
\newtheorem{cor}[thm]{Corollary}
\theoremstyle{definition}
\theoremstyle{remark}
\newtheorem{remark}[thm]{Remark}
\numberwithin{equation}{section}
\title[Mixed and transverse ray transforms]{On mixed and transverse ray transforms on orientable surfaces}
\author{Joonas Ilmavirta}
\address{Department of Mathematics and Statistics\\University of Jyv\"askyl\"a\\
P.O. Box 35 (MaD) FI-40014 University of Jyv\"askyl\"a, Finland\\
\texttt{joonas.ilmavirta@jyu.fi}}
\author{Keijo M\"onkk\"onen}
\address{Department of Mathematics and Statistics\\University of Jyv\"askyl\"a\\
P.O. Box 35 (MaD) FI-40014 University of Jyv\"askyl\"a, Finland\\
\texttt{kematamo@jyu.fi}}
\author{Jesse Railo}
\address{Department of Pure Mathematics and Mathematical Statistics\\ University of
Cambridge\\ Cambridge CB3 0WB, UK\\
\texttt{jr891@cam.ac.uk}}
\date{\today}
\begin{document}

\begin{abstract}
The geodesic ray transform, the mixed ray transform and the transverse ray transform of a tensor field on a surface can all be seen as what we call mixing ray transforms, compositions of the geodesic ray transform and an invertible linear map on tensor fields.
We provide an approach that uses a unifying concept of symmetry to merge various earlier transforms (including mixed, transverse, and light ray transforms) into a single family of integral transforms with similar kernels.
\end{abstract}
\keywords{Geodesic ray transform, integral geometry, inverse problems}

\subjclass[2010]{44A12, 65R32, 53A99}




\maketitle


\section{Introduction}

We give an algebraic point of view to various geodesic ray transforms of tensor fields, unifying the Riemannian X-ray transform, the transverse ray transform, and the mixed ray transform, and the Lorentzian light ray transforms.
Our main result is a unifying point of view to two-dimensional ray transforms of tensor fields, not a single new injectivity result.
This approach comes with a natural notion of symmetry, which is not generally the same as the symmetry of the covariant tensor field whose integral transforms are under study, but arises from the structure of the relevant transform.

When two transforms differ from each other by a so-called mixing, they have the same injectivity properties by theorem~\ref{thm:linalg}.
Mixings turn mixed ray transforms into regular tensor transforms in two dimensions.
In corollary~\ref{cor:mixedraysimplesurface} we recast the injectivity result~\cite{dHSZ18} of the mixed ray transform on simple surfaces in our language and we provide a reproof in corollary~\ref{cor:sinjectivitymixed}.
These results are also extended to Cartan--Hadamard manifolds in corollary~\ref{cor:cartanhadamardinjectivity}.

The tensor tomography results~\cite{FIO-light-ray} on globally hyperbolic Lorentzian manifolds have a different kind of kernel than their Riemannian counterpart.
The kernel, when operating on symmetric tensor fields of order $m\geq2$, contains both potential fields and conformal multiples of the metric.
In the present approach the conformal gauge is absorbed into the concept of symmetry, making the statements of solenoidal injectivity (s-injectivity) fully analogous on Riemannian and Lorentzian manifolds; see corollary~\ref{cor:lorentzinjectivity}.

A number of corollaries of the method are given in this article, and we refrain from listing them all here.
Consequently we have a great amount of notation, and we have collected the key items in appendix~\ref{appendix} to help the reader.

\subsection{Mixing ray transforms}

Let~$M$ be a Riemannian manifold of dimension $n \geq 2$.
Let $f \in \X(T_mM)$ be a covariant $m$-tensor field (not necessarily symmetric) where $m\geq 1$.
We completely exclude the scalar case $m=0$ from our discussion.
Let $A\colon \X(T_mM) \to \X(T_mM)$ be an invertible linear map such that
\begin{equation}
(Af)_x(v_1,\dots,v_m)
=
f_x(A_1(x)v_1,\dots,A_m(x)v_m),
\end{equation}
where $A_i(x)\colon T_xM \to T_xM$ are linear isomorphisms.
The linear maps $\X(T_mM) \to \X(T_mM)$ of this form are called \emph{mixings} in this article.

We study the class of geodesic ray transforms, called \emph{mixing ray transforms}, defined by the formula
\begin{equation}
\label{eq:definition}
\begin{split}
I_Af(x,v)
&\coloneqq
\int_{\tau_-(x,v)}^{\tau_+(x,v)}
(Af)_{\gamma_{x,v}(t)}(\dot{\gamma}_{x,v}(t)^{\otimes m})
\der t
\\&=
\int_{\tau_-(x,v)}^{\tau_+(x,v)}
f_{\gamma_{x,v}(t)}(A_1(\gamma_{x,v}(t))\dot{\gamma}_{x,v}(t),\dots,A_m(\gamma_{x,v}(t))\dot{\gamma}_{x,v}(t))
\der t
,
\end{split}
\end{equation}
where $\gamma_{x,v}\colon [\tau_-(x,v), \tau_+(x,v)] \to M$ is the maximal unit speed geodesic through $(x,v) \in SM$. 
Formula~\eqref{eq:definition} is invariant under the geodesic flow $\varphi_t(x,v) = (\gamma_{x,v}(t),\dot{\gamma}_{x,v}(t))$, that is, $I_Af(x,v) = I_Af(\varphi_t(x,v))$ for any $t \in \R$ in the maximal domain of~$\gamma_{x,v}$.
This definition allows to define~$I_A$ on Riemannian manifolds without boundary, provided that the tensor field~$f$ is sufficiently integrable.
We remark that if $A_i= \id$ for every $i=1,\dots,m$, then~$I_A$ is the usual geodesic ray transform of tensor fields.
Other special cases of the mixing ray transforms in two dimensions have been studied earlier in \cite{BH-tomographic-reconstruction-vector-fields, dHSZ18, DS-tomography}, and somewhat related geodesic ray transforms in higher dimensions have been studied recently in \cite{ABH-support-theorem-transverse-ray, dESUZ-generic-uniqueness-mixed-ray, LW-diffraction-tomography}.
We remark that the mixing ray transforms are defined for all $n\geq 2$ but they do not include the higher dimensional transforms ($n\geq 3$) studied in \cite{ABH-support-theorem-transverse-ray, dESUZ-generic-uniqueness-mixed-ray, LW-diffraction-tomography}.

The main problems that we study are uniqueness and stability for recovering $f \in \X(T_mM)$ from the knowledge of~$I_Af$.
The main point of this work is an algebraic view of the mixing ray transforms.
We present many applications of the method and instead of having a main theorem we have a main idea how to study the mixing ray transforms.
We show in theorem~\ref{thm:linalg} and corollary~\ref{cor:sinjectivity} that the related inverse problems for~$I_A$ and~$I_{\widetilde{A}}$ with two different mixings~$A$ and~$\widetilde{A}$ can be reduced to each other.
Especially, this allows us to derive new uniqueness and stability results for the mixed and transverse ray transforms in two dimensions using the known results for the geodesic ray transform.
These results are given in corollaries~\ref{cor:sinjectivitymixed}, \ref{cor:cartanhadamardinjectivity}, \ref{cor:stability} and~\ref{cor:transverseinjectivity}.
Moreover, we show in corollaries~\ref{cor:simplesurfaceuniqueness} and~\ref{cor:cartanhadamarduniqueness} that on compact simple surfaces and on certain Cartan--Hadamard manifolds the geodesic ray transform and the transverse ray transform together determine one-forms uniquely.
This extends results in~\cite{BH-tomographic-reconstruction-vector-fields,DS-tomography} to more general Riemannian manifolds.

Furthermore, we study tensor decompositions and their symmetries with respect to these integral transforms.
These considerations lead us to corollaries~\ref{cor:mixedraysimplesurface} and~\ref{cor:lorentzinjectivity} which show how the earlier kernel characterizations of the mixed ray transform on compact simple surfaces and the light ray transform on static globally hyperbolic Lorentzian manifolds can be seen as s-injectivity results under the correct notions of symmetry.

\subsection{Related problems}

The geodesic ray transform has been studied extensively on Riemannian manifolds and s-injectivity is known in many cases.  For example, the geodesic ray transform is s-injective on tensor fields of any order on two-dimensional compact simple manifolds~\cite{PSU-tensor-tomography-on-simple-surfaces} and on simply connected compact manifolds with strictly convex boundary and non-positive curvature \cite{PS-sharp-stability-nonpositive-curvature, PS-integral-geometry-negative-curvature,SHA-integral-geometry-tensor-fields}. S-injectivity is also known on non-compact Cartan--Hadamard manifolds for all tensor fields which satisfy certain decay conditions \cite{LE-cartan-hadamard, LRS-tensor-tomography-cartan-hadamard}. We refer to the surveys \cite{IM:integral-geometry-review, PSU-tensor-tomography-progress} for a more comprehensive treatment of the geodesic ray transform and s-injectivity. The mixed ray transform has been studied mainly on two- and three-dimensional compact simple manifolds, and the kernel is known in these cases for a certain class of tensor fields \cite{dESUZ-generic-uniqueness-mixed-ray, dHSZ18} (see also~\cite{DS-tomography, SHA-integral-geometry-tensor-fields}). There are a few results for the transverse ray transform: in~$\R^2$ the kernel of the transverse ray transform on vector fields consists of curls of scalar fields~\cite{DS-tomography, NA-mathematical-methods-image-reconstruction}, and in higher dimensions the transform is even injective on certain manifolds~\cite{SHA-integral-geometry-tensor-fields} (see also ~\cite{ABH-support-theorem-transverse-ray} for a support theorem). Somewhat related transforms are the restricted transverse ray transform~\cite{VMS-transverse-partialdata} and the truncated transverse ray transform~\cite{LS-truncated-transverse, SHA-integral-geometry-tensor-fields}.

The usual applications of the geodesic ray transform are medical imaging \cite{NA-mathematics-computerized-tomography, NA-mathematical-methods-image-reconstruction}, Doppler tomography \cite{SCHU-importance-of-vector-field-tomography,SS-vector-field-overview} and seismic imaging \cite{SHA-integral-geometry-tensor-fields, SUVZ-travel-time-tomography}. The transverse ray transform has applications in polarization tomography~\cite{SHA-integral-geometry-tensor-fields}, photoelasticity~\cite{HL-applications-to-photoelasticity}, diffraction tomography~\cite{LW-diffraction-tomography} and also in the determination of the refractive index of gases \cite{BH-tomographic-reconstruction-vector-fields, SCHWA-flame-analysis-schlieren}. The mixed ray transform arises in seismology as a linearization of elastic travel time tomography problem \cite{dESUZ-generic-uniqueness-mixed-ray, SHA-integral-geometry-tensor-fields}.

\subsection*{Organization of the article} In section 2 we recall the preliminaries on the geodesic ray transform and the mixed ray transform.
In section 3 we define the mixing ray transforms and study their basic properties using an algebraic approach.
In section 4 we apply our methods for the mixed ray transform and the transverse ray transform on orientable two-dimensional Riemannian manifolds which admit s-injectivity of the geodesic ray transform. We have included some of our notation in appendix~\ref{appendix}.

\subsection*{Acknowledgements}
The authors wish to thank Teemu Saksala for helpful discussions related to the mixed ray transform.
J.I. was supported by Academy of Finland (grants 332890 and 336254).
K.M. and J.R. were supported by Academy of Finland (Centre of Excellence in Inverse Modelling and Imaging, grant numbers 284715 and 309963).

\section{Preliminaries}
\label{sec:preliminaries}

We mainly follow the reference~\cite{SHA-integral-geometry-tensor-fields} for the integral geometry part of this section.
Basic theory of differential geometry can be found in \cite{LEE-smooth-manifolds, LEE-riemannian-manifolds} and basic theory of Sobolev spaces of tensor fields on manifolds can be found for example in \cite{BH-sobolev-spaces-compact-manifolds, WE-uhlenbeck-compactness}.
We always assume that~$(M, g)$ is a connected Riemannian manifold, and we can sometimes allow it to be pseudo-Riemannian.

\subsection{Notation}\label{sec:notation}

If~$E$ is a vector bundle, we denote by~$\X(E)$ the space of all smooth sections of~$E$.
We use this notation whenever the regularity is unimportant.

We let~$T^{m_2}_{m_1}M=T^*M^{\otimes m_1}\otimes TM^{\otimes m_2}$ be the bundle of tensors of type $(m_2, m_1)$ over~$M$.
Then $\X(T^{m_2}_{m_1}M)$ is the space of all $(m_2, m_1)$-tensor fields on~$M$.
We also write $\X(T_m M)\coloneqq\X(T^0_m M)$.

We denote by $S_mM\subset \X(T_mM)$ the space of all symmetric covariant tensor fields.
When we want to emphasize the regularity of the tensor field, we replace~$\X$ with the regularity in question; for example $C^q(T_mM) \subset \X(T_mM)$, $q \in \Nset$, is the space of all $C^q$-smooth $(0, m)$-tensor fields on~$M$.
For symmetric tensor fields we write~$C^q(S_mM)$ and so on.
We use the Einstein summation convention, where every repeated index (both as a subscript and superscript) is implicitly summed over.

\subsection{Sobolev norms of tensor fields}\label{sec:sobolevnorms}

Let $f, h\in \X(T_mM)$ be tensor fields and $m \geq 1$. We define the fiberwise inner product as
\begin{equation}
g_x(f, h)=g^{i_1j_1}(x)\dotso g^{i_mj_m}(x)f_{i_1\dotso i_m}(x)h_{j_1\dotso j_m}(x)
\end{equation}
and the fiberwise norm is denoted by $\abs{f}_{g_x}=\sqrt{g_x(f, f)}$.
If $m=0$, we simply let $\abs{f}_{g_x}\coloneqq\abs{f(x)}$.

Let~$\der V_g(x)$ be the Riemannian volume measure on~$M$. If~$M$ is orientable, then~$\der V_g(x)$ is given by the Riemannian volume form and $\der V_g(x)=\sqrt{\det g(x)}\der x^1\wedge\dotso\wedge\der x^n$ where $(x^1,\dots,x^n)$ are any positively oriented smooth coordinates. We define the $L^p$-norm, $1\leq p<\infty$, of a tensor field $f \in \X(T_mM)$ by
\begin{equation}
\norm{f}_p=\bigg(\int_M\abs{f}_{g_x}^p \der V_g(x)\bigg)^{1/p}
\end{equation}
whenever the integral exists.

Denote by $\nabla^k f\in C^{q-k}(T_{m+k}M)$ the $k$th iterated covariant derivative of the tensor field $f \in C^q(T_mM)$ whenever $q \geq k \geq 0$ and $k, q \in \Nset$.
We define the Sobolev norm $\norm{\cdot}_{k, p}$ as
\begin{equation}
\norm{f}_{k, p}
=
\bigg(\sum_{i=0}^k\norm{\nabla^i f}_p^p\bigg)^{1/p}
\end{equation}
where $\nabla^0 f\coloneqq f$.
Let~$C^\infty_{k, p}(T_mM)$ be the set of smooth tensor fields~$f$ for which $\norm{f}_{k, p}<\infty$.
The Sobolev space~$W^{k, p}(T_mM)$ is defined to be the completion of~$C^\infty_{k, p}(T_mM)$ with respect to the norm~$\norm{\cdot}_{k, p}$.
We are mainly interested in the space $W^{k, 2}(T_mM)=:H^k(T_mM)$.
Then~$H^k(T_mM)$ is a Hilbert space with the inner product
\begin{equation}
\sis{f , h}_{H^k(T_mM)}
=
\sum_{i=0}^k\sis{\nabla^i f, \nabla^i h}_{L^2(T_{m+i}M)}
=
\sum_{i=0}^k\int_M g_x(\nabla^i f, \nabla^i h)\der V_g(x).
\end{equation}
Similarly one defines the Sobolev space $H^k(S_mM)\subset H^k(T_mM)$ as the completion of~$C^\infty_{k, 2}(S_mM)$ with respect to the norm induced by the inner product $\sis{\cdot, \cdot}_{H^k(T_mM)}$.

\subsection{Hodge star on orientable Riemannian surfaces}

Assume that $(M, g)$ is two-dimensional orientable Riemannian manifold.
For example,~$M$ is orientable if there is a smooth mapping $F\colon M\rightarrow N$ such that~$F$ is a local diffeomorphism and~$N$ is orientable, or if~$M$ is simply connected~\cite{LEE-smooth-manifolds}.
The Hodge star~$\star$ is an operator on one-forms $\star\colon \X(T_1M)\rightarrow \X(T_1M)$ and it corresponds to a 90 degree rotation counterclockwise.
Orientability of~$M$ guarantees that~$\star$ is a well-defined global operator.
Since we can identify one-forms with vector fields by the musical isomorphisms~$\flat$ and~$\sharp$, we can also rotate vector fields.
To shorten the notation, we simply let $\sharp \star \flat=:\star$ and locally we have
\begin{equation}
\star(v^1 e_1 + v^2 e_2) \coloneqq  -v^2 e_1 + v^1 e_2
\end{equation} 
in any positively oriented local orthonormal frame~$\{e_1,e_2\}$.


\subsection{The geodesic ray transform}
\label{subsec:geodesicraytransform}

For any set~$X$ we denote by~$\mathcal{F}(X)$ the space of all complex-valued functions $X\to \mathbb{C}$.
We define the map $\lambda\colon \X(T_mM) \rightarrow\mathcal{F}(TM)$ as
\begin{equation}
(\lambda f)(x, v) \coloneqq f_x(v, \dotso, v)=f_{i_1\dotso i_m}(x)v^{i_1}\dotso v^{i_m}
\end{equation}
where $f_{i_1\dotso i_m}(x)$ are the components of the tensor field $f\in \X(T_mM)$ in any local coordinates. 
We let $SM=\bigcup_{x\in M}S_xM$ be the sphere bundle where the fibers are the unit spheres $S_xM=\{v\in T_xM: \abs{v}_{g_x}=1\}$ of the tangent spaces~$T_xM$.
The unit sphere bundle~$SM$ is not to be confused with the space~$S_mM$ of symmetric covariant tensor fields of order~$m$.
The geodesic flow is defined as $\varphi_t(x, v)=(\gamma_{x, v}(t), \dot{\gamma}_{x, v}(t))$ where $\gamma_{x,v}(t)$ is the unique geodesic such that $(\gamma_{x,v}(0),\dot{\gamma}_{x,v}(0)) = (x,v) \in SM$.
If~$M$ has boundary~$\partial M$, we denote by $\tau(x, v)$ the first time when the geodesic $\gamma_{x,v}$ reaches~$\partial M$.

Assume that $(M, g)$ is compact and non-trapping Riemannian manifold with boundary. Non-trapping means that $\tau(x, v)<\infty$ for all $(x, v)\in SM$. We denote by $\partial_{\mathrm{in}} SM \subset \partial SM$ the inward-pointing unit vectors. We define the geodesic ray transform to be the operator $I: \mathcal{X} \to \mathcal{F}(\partial_{\mathrm{in}} SM)$ given by the formula
\begin{equation}
\label{eq:grtbdry}
If(x, v)=\int_0^{\tau(x, v)}(\lambda f)(\varphi_t(x, v))\der t, \quad (x, v)\in\partial_{\mathrm{in}} SM
\end{equation}
where $\mathcal{X} \subset \X(T_mM)$ is any set such that the integral in~\eqref{eq:grtbdry} is well-defined. Typically we choose $\mathcal{X} = C_c^\infty(T_mM)$ or $\mathcal{X} = H^k(T_mM)$. We note that two definitions~\eqref{eq:definition} and~\eqref{eq:grtbdry} agree when $A = \id$ and $(x,v) \in \partial_{\mathrm{in}} SM$ (in that case $\tau_+(x,v) = \tau(x,v)$ and $\tau_-(x,v) = 0$). One can also write $If=I_{SM}(\lambda f|_{SM})$ where the geodesic ray transform of a function $h\colon SM\rightarrow\R$ is
\begin{equation}
\label{eq:geodesicrayspherebundle}
I_{SM}h(x, v)=\int_0^{\tau(x, v)}h(\varphi_t(x, v))\der t, \quad (x, v)\in\partial_{\mathrm{in}} SM.
\end{equation}
One can then define an adjoint~$I^*$ by duality using an $L^2$-inner product. However, there are different measures on~$\partial_{\mathrm{in}} SM$ which lead to different adjoints.
We use the weighted measure defined in~\cite{PSU-tensor-tomography-progress} which is invariant under the scattering relation, and the normal operator $N=I^*I$ is defined with respect to this measure.

If $f\in H^k(S_mM)$ and~$M$ is a compact Riemannian manifold with boundary, then there is the solenoidal decomposition \cite[Theorem 3.3.2]{SHA-integral-geometry-tensor-fields}
\begin{equation}
f
=
\sol{f}+\sigma\nabla p, \quad \text{div}(\sol{f})=0, \ \ p|_{\partial M}=0
\end{equation}
where $\sol{f}\in H^k(S_mM)$, $p\in H^{k+1}(S_{m-1}M)$ and $m\geq 1$. Moreover, if $f \in C^\infty(S_mM)$, then $\sol{f}\in C^\infty(S_mM)$ and $p\in C^\infty(S_{m-1}M)$. Here~$\sigma$ is the symmetrization of tensor fields (see section~\ref{sec:decomp} for details) and~$\text{div}(\cdot)$ is the covariant divergence. The tensor field~$\sol{f}$ is the solenoidal part and~$\sigma\nabla p$ is the potential part of~$f$. By the fundamental theorem of calculus one sees that $I(\sigma\nabla p)=0$ since~$p$ vanishes on the boundary. Therefore potentials are always in the kernel of~$I$ and we can only try to recover the solenoidal part of~$f$ from~$I$. When $m\geq 1$ we say that~$I$ is solenoidally injective (s-injective) if for sufficiently regular $f\in S_mM$ it holds that $If=0$ if and only if $f=\sigma\nabla p$ for some (sufficiently regular) $p\in S_{m-1}M$ vanishing on the boundary. 

One particular class of manifolds where one usually studies the geodesic ray transform is the class of compact simple manifolds.
The manifold $(M, g)$ is simple if it is non-trapping, has no conjugate points and the boundary~$\partial M$ is strictly convex (the second fundamental form on~$\partial M$ is positive definite).
Each compact simple manifold is diffeomorphic to the Euclidean unit ball. It also follows that compact simple manifolds are simply connected and hence orientable \cite{PSU-tensor-tomography-on-simple-surfaces, THO-closed-geodesics}.

One can also study the geodesic ray transform on certain non-compact manifolds. The manifold $(M, g)$ without boundary is a Cartan--Hadamard manifold if it is complete, simply connected and its sectional curvature is nonpositive.
Cartan--Hadamard manifolds are always non-compact, orientable and diffeomorphic to~$\R^n$.
Basic examples of Cartan--Hadamard manifolds are Euclidean and hyperbolic spaces. On such manifolds the geodesic ray transform is defined as
\begin{equation}
\label{eq:geodesicraycartanhadamard}
If(x, v)=\int_{-\infty}^{\infty}(\lambda f)(\varphi_t(x, v))\der t, \quad (x, v)\in SM.
\end{equation}
Note that completeness implies that geodesics are defined on all times by the Hopf--Rinow theorem~\cite{LEE-riemannian-manifolds}.
We will use the following classes of tensor fields on Cartan--Hadamard manifolds:
\begin{equation}\label{eq:cartanhadamardspaces}
\begin{split}
E_{\eta}(T_mM)
&=
\{f\in C^1(T_mM):
\\&\qquad
\abs{f}_{g_x}\leq Ce^{-\eta d(x, o)} \ \text{for some} \ C>0\},
\\
E_{\eta}^1(T_mM)
&=
\{f\in C^1(T_mM):
\\&\qquad
\abs{f}_{g_x}+\abs{\nabla f }_{g_x}\leq Ce^{-\eta d(x, o)} \ \text{for some} \ C>0\},
\\
P_{\eta}(T_mM)&=\{f\in C^1(T_mM):
\\&\qquad
\abs{f}_{g_x}\leq C(1+d(x, o))^{-\eta} \ \text{for some} \ C>0\},
\\
P_{\eta}^1(T_mM)&=\{f\in C^1(T_mM):
\\&\qquad
\abs{f}_{g_x}\leq C(1+d(x, o))^{-\eta} \ \text{and}
\\&\qquad
\abs{\nabla f}_{g_x}\leq C(1+d(x, o))^{-\eta-1} \ \text{for some} \ C>0\}.
\end{split}
\end{equation}
Here $o\in M$ is fixed reference point and $\eta>0$.
The spaces defined above are independent of the choice of this point.

\subsection{The mixed and transverse ray transforms}
\label{sec:definitionofmixedray}
Define $S_kM\otimes S_lM\subset \X(T_{k+l}M)$ to be the set of $(k+l)$-tensor fields which are symmetric in the first~$k$ and last~$l$ variables.
Let~$S_k(T_xM)$ denote the space of symmetric $(0,k)$-tensors on~$T_xM$ for any fixed $x \in M$. If $f \in S_kM\otimes S_lM$, then $f_x \in S_k(T_xM) \otimes S_l(T_xM)$.
Let $\pi\colon\partial_{\mathrm{in}} SM\rightarrow M$ be the restriction of the projection of the tangent bundle.
Let~$\pi^*(S_kM)$ be the pullback bundle of symmetric $k$-tensor fields so that for every $\varphi\in \X(\pi^*(S_kM))$ and $(x, v)\in \partial_{\mathrm{in}} SM$ we have $\varphi_{x,v} \in S_k(T_xM)$.

Let $v \in S_xM$. We define the projection operator $p_v: T_x M \to v^\bot \subset T_xM$ as
\begin{equation}
p_v (w) \coloneqq  w - g_x(w,v)v = \left(\delta^{i}_{j}-v_{j}v^{i}\right)w^je_i
\end{equation} 
where the latter formula holds in any local coordinates. We then define the projection operator $P_{v}^k\colon S_k(T_xM)\rightarrow S_k(T_xM)$ by the formula
\begin{equation}
    (P_v^kh)(v_1,\dots,v_k) \coloneqq  h(p_v(v_1),\dots,p_v(v_k))
\end{equation}
for any $v_1,\dots,v_k \in T_xM$, and one can write in any local coordinates that
\begin{equation}
(P_{v}^kh)_{i_1\dotso i_k}=(\delta^{j_1}_{i_1}-v^{j_1}v_{i_1})\dotso(\delta^{j_k}_{i_k}-v^{j_k}v_{i_k})h_{j_1\dotso j_k}.
\end{equation}
We can identify~$p_v$ as a $(1,1)$-tensor by setting $\tilde{p}_v(\alpha, w) \coloneqq  \alpha(p_v(w))$ where $w \in T_xM$ and $\alpha \in T_x^*M$. We note that also $P_v^k h = \tilde{p}_v^{\otimes k} h$ where the product on the right hand side is a contraction of~$\tilde{p}_v^{\otimes k}$ by~$h$.

We define the contraction of $f \in S_k(T_xM)\otimes S_l(T_xM)$ by $v \in T_xM$ with respect to the last~$l$ arguments as a mapping $\Lambda_{v}^l\colon S_k(T_xM)\otimes S_l(T_xM)\rightarrow S_k(T_xM)$ by
\begin{equation}
(\Lambda_{v}^lf)_{i_1\dotso i_k}=f_{i_1\dotso i_k j_1\dotso j_l}v^{j_1}\dotso v^{j_l}.
\end{equation}
Let us denote by $\mathcal{T}^{t\to s}_{\gamma}$ the parallel transport along~$\gamma$ from~$\gamma(t)$ to~$\gamma(s)$ whenever $s, t \in \R$ belong to the maximal domain of~$\gamma$. The mixed ray transform is the map $L_{k, l}\colon S_kM\otimes S_lM\rightarrow \pi^*(S_kM)$ defined as
\begin{equation}
L_{k, l}f(x, v)
\coloneqq 
\int_{0}^{\tau(x, v)}
\mathcal{T}^{t\to0}_{\gamma_{x,v}}(P_{\dot{\gamma}_{x,v}(t)}^k\Lambda_{\dot{\gamma}_{x,v}(t)}^lf_{\gamma_{x,v}(t)})
\der t
\end{equation}
for any $(x,v) \in \partial_{\mathrm{in}} SM$,
whenever the integral is well-defined.
We note that
\begin{equation}
\label{eq:Plambdaf}
\begin{split}
&
P_{\dot{\gamma}_{x,v}(t)}^k\Lambda_{\dot{\gamma}_{x,v}(t)}^lf_{\gamma_{x,v}(t)}(w_1,\dots,w_k)
\\&=
f_{\gamma_{x,v}(t)}(p_{\dot{\gamma}_{x,v}(t)}w_1,\dots,p_{\dot{\gamma}_{x,v}(t)}w_k,\dot{\gamma}_{x,v}(t),\dots,\dot{\gamma}_{x,v}(t))
\end{split}
\end{equation}
for any $w_1,\dots,w_k \in T_{\gamma_{x,v}(t)}M$.

Using~\eqref{eq:Plambdaf} and the definition of the parallel transport~$\mathcal{T}^{t\to s}_{\gamma}$, one can show that the mixed ray transform acts on $(x,v) \in \partial_{\mathrm{in}} SM$ as
\begin{equation}
\label{eq:mixduality}
\begin{split}
&\sis{L_{k, l}f(x,v), (\eta+av)^{\otimes k}}
\\&=
\int_0^{\tau(x, v)} f_{i_1\dotso i_k j_1\dotso j_l}(\gamma_{x, v}(t))\eta^{i_1}_{x, v}(t)\dotso\eta_{x, v}^{i_k}(t)\dot{\gamma}_{x, v}^{j_1}(t)\dotso\dot{\gamma}_{x, v}^{j_l}(t)\der t
\end{split}
\end{equation}
where $(\eta+av)^{\otimes k}$ is the tensor product of $\eta+av$ with itself~$k$ times, $a\in\R$ and $\eta_{x, v}(t)$ is the parallel transport of a vector $\eta=\eta_{x, v}(0) \in T_xM$ orthogonal to $v=\dot{\gamma}_{x, v}(0)$, see \cite[Chapter 5.2]{SHA-integral-geometry-tensor-fields} for details. 

The mixed ray transform is considerably simpler when~$M$ is orientable and $n=2$. Then~$v^{\perp}$ is one-dimensional for all $(x, v)\in\partial_{\mathrm{in}} SM$ and there is only one possible choice (modulo sign) for the vector~$\eta$ which is parallel transported along~$\gamma$.
We choose the orthogonal vector field as $\eta(t)=(\star\dot{\gamma})(t)$.
It is clear that $\star\dot{\gamma}\perp\dot{\gamma}$ at every point on the geodesic~$\gamma$ and that $D_t^{\gamma}(\star\dot{\gamma})=0$ where~$D_t^{\gamma}$ is the covariant derivative along the geodesic~$\gamma$.
Therefore~$\star\dot{\gamma}$ is parallel along~$\gamma$. Now using formula~\eqref{eq:mixduality} the mixed ray transform can be seen as a composition $L_{k, l}=I\circ A_{k, l}$ where 
\begin{equation}
\label{eq:mixedraytwodimensional}
(A_{k, l}f)_x(v_1, \dotso v_m)=f_x(A_1v_1, \dotso , A_mv_m)
\end{equation}
and $A_i = \star$ when $i = 1,\dots, k$ and $A_i = \id$ when $i = k+1,\dots, k+l$. Thus in two dimensions the mixed ray transform operates as
\begin{equation}
\label{eq:mixedraydefinition}
L_{k, l}f(x, v)=\int_0^{\tau(x, v)}(\lambda (A_{k, l}f))(\varphi_t(x, v))\der t, \quad (x, v)\in\partial_{\mathrm{in}} SM,
\end{equation}
and with these choices of~$A_{k, l}$ we have $L_{k, l}=I_{A_{k, l}}$ where the transform~$I_{A_{k, l}}$ is given by formula~\eqref{eq:definition}. 
If $k=0$, then~$L_{0, l}$ reduces to the geodesic ray transform~$I$.
If $l=0$, we call~$L_{k, 0}$ the transverse ray transform and use the notation $I_{\perp}\coloneqq L_{k, 0}$.
In higher dimensions $n>2$ the operator~$\star$ cannot be used to define the mixed ray transform since it maps $k$-forms into $(n-k)$-forms.

\section{The algebraic structure of mixing ray transforms}
\label{sec:linkku}

\subsection{Decompositions of tensor fields}
\label{sec:decomp}
Let $\sigma\colon \X(T_mM)\rightarrow S_mM$ be the usual symmetrization map of tensor fields where $m\geq 2$. We remind that if $m = 1$, then any $f \in \X(T_mM)$ is symmetric.
The components of~$\sigma f$ at a point $x\in M$ are 
\begin{equation}\label{eq:sigmadef}
(\sigma f)_{i_1\dotso i_m}(x)=\frac{1}{m!}\sum_{\tau\in\Pi_m}f_{i_{\tau(1)}\dotso i_{\tau(m)}}(x)
\end{equation}
where~$\Pi_m$ is the group of permutations.
The symmetrization~$\sigma$ is a projection $\X(T_mM) \to S_mM$, and it turns out to be orthogonal at every point with respect to any Riemannian metric by proposition~\ref{prop:V1V2V3}.
In particular,~$\sigma$ is idempotent and we can decompose the space~$\X(T_mM)$ as
\begin{equation}
\label{eq:decompositionsymmetrization}
\ker(\sigma)\oplus \im(\sigma)=\X(T_mM)
\end{equation}
by letting $f=(f-\sigma f)+\sigma f$. The decomposition~\eqref{eq:decompositionsymmetrization} can be done on any differentiable manifold~$M$. The set~$\ker(\sigma)$ can be identified with antisymmetric tensor fields when $m=2$ and
for $m>2$ the antisymmetric tensor fields are a strict subset of~$\ker(\sigma)$.

Recall that the map $\lambda\colon \X(T_mM)\rightarrow \mathcal{F}(TM)$ was defined as
\begin{equation}\label{eq:lambdaoperator}
(\lambda f)(x, v)=f_x(v, \dotso, v)
\end{equation}
where~$\mathcal{F}(TM)$ is the space of all complex-valued functions on~$TM$. We note that the restriction~$\lambda f|_{SM}$ determines~$\lambda f$ completely since~$f_x$ is homogeneous of degree~$m$. It follows directly from the definitions that $\lambda\circ\sigma=\lambda$. It is true that $\ker(\sigma)=\ker(\lambda)$ (see proposition~\ref{prop:V1V2V3}) and $\ker(\lambda) \subset\ker(I)$. Hence we call $\ker(\lambda) \subset \X(T_mM)$ the set of \emph{$\lambda$-antisymmetric tensor fields} or \emph{trivial part of the kernel of the geodesic ray transform} depending on the context.

We denote by $\lambda_x\colon (T^*_xM)^{\otimes m}\rightarrow \mathcal{F}(T_xM)$ the map $(\lambda_x \omega)(v)=\omega (v, \dotso, v)$, i.e. $(\lambda f)(x, v)=(\lambda_x f_x)(v)$. We let~$\sigma_x$ be the symmetrization of $m$-tensors in $(T^*_xM)^{\otimes m}$ and $S_m(T_xM)$ is the space of symmetric $m$-tensors in~$(T^*_xM)^{\otimes m}$.
We have the following proposition which summarizes some important connections between the different concepts introduced above.

\begin{prop}
\label{prop:V1V2V3}
Suppose that $m \geq 2$ and let~$M$ be a Riemannian (or pseudo-Riemannian) manifold. Let $x \in M$ and define the sets
\begin{enumerate}[(a)]
\item $V_1 = S_m(T_xM)$, $V_2 = \im(\sigma_x)$ and $V_3 = \ker(\lambda_x)^\bot$.\label{item:ekaprop}
\item $W_1 = (S_m(T_xM))^\bot$, $W_2 = \ker(\sigma_x)$ and $W_3 = \ker(\lambda_x)$.
\end{enumerate}
Then $V_1 = V_2 = V_3$, $W_1 = W_2 = W_3$, and $V_i \oplus W_j = (T_x^*M)^{\otimes m}$ for any $i, j =1,2,3$.
\end{prop}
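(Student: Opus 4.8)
The plan is to prove the chain of equalities in two stages: first identify the three ``small'' subspaces $W_1=W_2=W_3$, and then deduce the statement about the $V_i$ and the direct sum decompositions from orthogonal-complement bookkeeping. The pointwise fiberwise inner product on $(T_x^*M)^{\otimes m}$ that we use throughout is the one induced by $g_x$ (or its analogue when $g$ is only pseudo-Riemannian), so ``$\bot$'' always refers to this form.

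First I would show $W_2 = W_3$, i.e.\ $\ker(\sigma_x) = \ker(\lambda_x)$. The inclusion $\ker(\sigma_x) \subseteq \ker(\lambda_x)$ is immediate from $\lambda_x\circ\sigma_x = \lambda_x$ (stated in the excerpt): if $\sigma_x\omega = 0$ then $\lambda_x\omega = \lambda_x\sigma_x\omega = 0$. For the reverse inclusion, suppose $\lambda_x\omega = 0$; since $\lambda_x\omega = \lambda_x\sigma_x\omega$, the symmetric tensor $\sigma_x\omega \in S_m(T_xM)$ satisfies $(\sigma_x\omega)(v,\dots,v) = 0$ for all $v\in T_xM$. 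The key fact here is that a symmetric $m$-tensor is determined by its associated degree-$m$ homogeneous polynomial $v\mapsto (\sigma_x\omega)(v,\dots,v)$ — this is polarization, valid over $\R$ (characteristic zero) — hence $\sigma_x\omega = 0$, i.e.\ $\omega\in\ker(\sigma_x)$. So $W_2 = W_3$.

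Next I would establish $W_1 = W_2$, i.e.\ $\ker(\sigma_x) = (S_m(T_xM))^\bot$, together with $V_1 = V_2 = V_3$. Since $\sigma_x$ is idempotent with image $S_m(T_xM)$, we have $\ker(\sigma_x)\oplus S_m(T_xM) = (T_x^*M)^{\otimes m}$ as vector spaces, and $V_2 = \im(\sigma_x) = S_m(T_xM) = V_1$ trivially. To get orthogonality I would compute $\sigma_x$ acting on a simple tensor $\alpha_1\otimes\cdots\otimes\alpha_m$ via the averaging formula~\eqref{eq:sigmadef} and check that $g_x(\sigma_x\omega,\eta) = g_x(\omega,\sigma_x\eta)$ for all $\omega,\eta$: because $g_x$ on $(T_x^*M)^{\otimes m}$ is the $m$-fold tensor product of the inner product on $T_x^*M$, each permutation $\tau\in\Pi_m$ acts as an isometry and is the adjoint of $\tau^{-1}$, and summing over the group shows $\sigma_x$ is self-adjoint. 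A self-adjoint idempotent is an orthogonal projection, so its kernel is exactly the orthogonal complement of its image: $W_2 = \ker(\sigma_x) = S_m(T_xM)^\bot = W_1$, and dually $V_3 = \ker(\lambda_x)^\bot = W_3^\bot = W_2^\bot = S_m(T_xM) = V_1$. This also records that $\sigma_x$ is the orthogonal projection onto $V_1$, which is the proposition's content about $\sigma$ claimed in the main text.

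Finally, the direct-sum claim $V_i\oplus W_j = (T_x^*M)^{\otimes m}$ for all $i,j$ follows since all the $V_i$ coincide with $V_1$ and all the $W_j$ coincide with $W_1 = V_1^\bot$, and $V_1\oplus V_1^\bot$ is the whole space (in the Riemannian case because $g_x$ is positive definite; in the pseudo-Riemannian case one must note separately that $g_x$ restricted to $S_m(T_xM)$ is nondegenerate, which can be seen from the explicit diagonalization of the form in an orthogonal coframe). The main obstacle I anticipate is the polarization step and, in the pseudo-Riemannian setting, the nondegeneracy of the form on the symmetric part — everything else is routine linear algebra with the averaging operator. I would handle the pseudo-Riemannian case by working in a $g_x$-orthogonal coframe $\{e^1,\dots,e^n\}$ with $g_x(e^i,e^i) = \pm 1$, in which the tensors $e^{i_1}\otimes\cdots\otimes e^{i_m}$ form an orthogonal basis with nonzero self-products, reducing both issues to the standard characteristic-zero statements.
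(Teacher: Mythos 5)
Your proof is correct, but it organizes the decisive step differently from the paper's. Both arguments establish $\ker(\sigma_x)=\ker(\lambda_x)$ by the same polarization lemma (a symmetric multilinear map is determined by its restriction to the diagonal). After that point the paper completes the proof by showing $\ker(\lambda_x)^\bot \subset S_m(T_xM)$ directly: it pairs an arbitrary $f\in\ker(\lambda_x)^\bot$ against explicitly constructed tensors $h\in\ker(\lambda_x)$ built from antisymmetrized products of Kronecker deltas, and reads off from $g_x(f,h)=0$ that $f$ is symmetric in every pair of indices. You instead prove $\ker(\sigma_x)=\big(S_m(T_xM)\big)^\bot$ by observing that each $\tau\in\Pi_m$ acts as a $g_x$-isometry with adjoint $\tau^{-1}$, so the averaged operator $\sigma_x$ is a self-adjoint idempotent, hence an orthogonal projection; the remaining identifications then follow from $(U^\bot)^\bot=U$ and the idempotent splitting. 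Your route is coordinate-free, and it yields as a byproduct the fact --- which the paper only records in a remark after the proposition --- that $\sigma$ is an orthogonal projection for any (pseudo-)Riemannian metric and that $\big(S_m(T_xM)\big)^\bot$ is metric-independent. It also makes the indefinite case transparent: for a nondegenerate ambient form a self-adjoint idempotent $P$ still satisfies $\ker P=(\im P)^\bot$, and the direct-sum claim comes for free from idempotency of $\sigma_x$, so your closing worry about nondegeneracy of $g_x$ restricted to $S_m(T_xM)$ is already resolved by the splitting $\ker(\sigma_x)\oplus\im(\sigma_x)=(T_x^*M)^{\otimes m}$ you state earlier; no separate coframe computation is needed. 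What the paper's more hands-on route buys is a concrete family of elements of $\ker(\lambda_x)$ that detect failure of symmetry, without ever invoking adjoints.
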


\begin{proof}
It follows directly from the definitions that $V_1 = V_2$. Suppose that $W_2 = W_3$ and $V_3 \subset V_1$. This implies that 
\begin{equation}(T_x^*M)^{\otimes m} = V_2 \oplus W_2= V_3 \oplus W_3 = V_3 \oplus W_2.\end{equation} Since $V_3 \subset V_1 = V_2$, we get that $V_2 = V_3$. It then follows that $V_1 = V_2 = V_3$, $W_1 = W_2 = W_3$, and $V_i \oplus W_j = (T_x^*M)^{\otimes m}$ for any $i, j =1,2,3$. Hence it is sufficient to show that $W_2 = W_3$ and $V_3 \subset V_1$.

Let us first prove that $W_2 = W_3$. It is clear that $\ker(\sigma_x) \subset \ker(\lambda_x)$ since $\lambda_x \circ \sigma_x = \lambda_x$. Let $f \in \ker(\lambda_x)$. It now follows that $\sigma_x f \in \ker(\lambda_x)$. The polarization identity for symmetric multilinear maps \cite[Theorem 1]{THO-polarization-identity} states that a symmetric multilinear map is uniquely determined by its restriction to the diagonal. Since $\lambda_x \sigma_x f$ is the restriction of $\sigma_x f\colon (T_xM)
^m\to \C$ to the diagonal of $(T_xM)^m$, $\sigma_x f \in S_m(T_xM)$ and $\lambda_x \sigma_x f = 0$, we obtain that $\sigma_x f = 0$. This shows that $\ker(\lambda_x) \subset \ker(\sigma_x)$, and we conclude that $W_2 = W_3$.

Let us then prove that $V_3 \subset V_1$. Let $f \in V_3$. Fix some indices $j_1^\prime, \dotso , j_m^\prime$ and define the components of the tensor $h\in (T_x^*M)^{\otimes m}$ as
\begin{equation}
h_{j_1\dotso j_m}=\big(\delta^{j_1^\prime}_{j_1}\delta^{j_m^\prime}_{j_m}-\delta^{j_1^\prime}_{j_m}\delta^{j_m^\prime}_{j_1}\big)\delta^{j_2^\prime}_{j_2}\dotsm\delta^{j_{m-1}^\prime}_{j_{m-1}}.
\end{equation}
Then $h\in\ker(\lambda_x)$ and $g_x(f, h)=0$ implies that $f_{i_1\dotso i_m}=f_{i_m\dotso i_1}$.
By switching the order of the indices~$j_{k}$ in the definition of~$h$ one sees that~$f_{i_1\dotso i_m}$ has to be symmetric with respect to all indices. Hence $V_3\subset V_1$. This completes the proof.
\end{proof}

\begin{remark} This gives a somewhat unintuitive implication that the orthogonal complement of~$S_m(T_xM)$ does not depend on the Riemannian metric~$g_x$ at the point $x \in M$. This follows from proposition~\ref{prop:V1V2V3} since the mapping~$\sigma_x$ does not depend on~$g_x$. Proposition~\ref{prop:V1V2V3} also shows that the symmetrization~$\sigma\colon\X(T_mM) \to S_mM$ is an orthogonal projection when~$M$ is equipped with any Riemannian or pseudo-Riemannian metric.
\end{remark}

By proposition~\ref{prop:V1V2V3} we have many choices for the decomposition of the space~$(T^*_xM)^{\otimes m}$. We will use the orthogonal complement so that $\ker(\lambda_x)\oplus\ker(\lambda_x)^\perp=(T^*_xM)^{\otimes m}$. This allows us to decompose the space~$\X(T_mM)$ in the following way. We define the space~$\ker(\lambda)^\perp$ by saying that $f\in\ker(\lambda)^{\perp}$ if and only if $f_x\in\ker(\lambda_x)
^\perp$ for all $x\in M$. Define the projection $\widehat{\sigma}\colon\X(T_mM)\rightarrow\ker(\lambda)^\perp$ such that $(\widehat{\sigma}f)_x=P_{\ker(\lambda_x)
^\perp}f_x$ where $P_{\ker(\lambda_x)^\perp}$ is the orthogonal projection $P_{\ker(\lambda_x)^\perp}\colon (T^*_xM)^{\otimes m}\rightarrow\ker(\lambda_x)^\perp$. Then $f=(f-\widehat{\sigma}f)+\widehat{\sigma}f$ where $f-\widehat{\sigma}f\in\ker(\lambda)$ and $\widehat{\sigma}f\in\ker(\lambda)^\perp$. Hence we have the orthogonal decomposition
\begin{equation}
\ker(\lambda)\oplus\ker(\lambda)^\perp=\X(T_mM)
\end{equation}
where orthogonality is understood pointwise. We call the map~$\widehat{\sigma}$ a \emph{$\lambda$-symmetrization}. Note that $\ker(\lambda)=\ker(\sigma)$ and $\ker(\lambda)^\perp=S_mM$ by proposition~\ref{prop:V1V2V3}.

Another way to view $\lambda$-symmetric tensor fields is to take the quotient space $\coim(\lambda)=\X(T_mM)/{\ker(\lambda)}$ which identifies all tensor fields which differ by an element of~$\ker(\lambda)$.
This definition is natural for the geodesic ray transform in the sense that $If=Ih$ whenever $f\sim h$.
It follows that if~$V$ is any algebraic complement of~$\ker(\lambda)$, i.e. $\ker(\lambda)\oplus V=\X(T_mM)$, then $V\cong \coim(\lambda)$ via the map $v\mapsto [v]$ where~$[v]$ is the equivalence class of~$v$. This shows that one can realize the abstract quotient space~$\coim(\lambda)$ as a complementary subspace of~$\ker(\lambda)$ and that all complementary subspaces are isomorphic. 

More generally, let $\Omega=\bigcup_{x\in M}\Omega_x\subset TM$ where $\Omega_x\subset T_xM$. Let~$r_x$ be the restriction of a multilinear map on~$T_xM$ to~$\Omega_x$. As before we can decompose $(T^*_xM)^{\otimes m}=\ker(\lambda_{r, x})\oplus\ker(\lambda_{r, x})^\perp$ where $\lambda_{r, x}=r_x\circ\lambda_x$. This splitting can be done globally as follows. Denote by $r\colon\mathcal{F}(TM)\rightarrow\mathcal{F}(\Omega)$ the restriction to~$\Omega$ and define $\lambda_r=r\circ\lambda$. Then we have the decomposition
\begin{equation}\label{eq:decompositionlambdar}
\ker(\lambda_r)\oplus \ker(\lambda_r)^\perp=\X(T_mM)
\end{equation}
by writing $f=(f-\widehat{\sigma}_rf)+\widehat{\sigma}_rf$ where $\widehat{\sigma}_r\colon\X(T_mM)\rightarrow\ker(\lambda_r)^\perp$ is defined as $(\widehat{\sigma}_r f)_x=P_{\ker(\lambda_{r, x})^\perp}f_x$ and the space~$\ker(\lambda_r)^\perp$ is defined pointwise as earlier. We call the projection~$\widehat{\sigma}_r$ a \emph{$\lambda_r$-symmetrization}. As above~$\ker(\lambda_r)^\perp$ can be seen as a realization of the quotient space $\coim(\lambda_r)=\X(T_mM)/\ker(\lambda_r)$. It follows that $\ker(\lambda)\subset\ker(\lambda_r)$ and $\ker(\lambda_r)
^\perp\subset\ker(\lambda)^\perp\subset S_mM$ by proposition~\ref{prop:V1V2V3}.

Note that if~$r$ is the restriction to~$SM$, then $\ker(\lambda_r)=\ker(\lambda)$.
The geodesic ray transform can then be seen as a composition $I=I_{SM}\circ\lambda_r$.
We will generalize this approach in the next subsection.

\subsection{The mixing ray transform}
\label{sec:mixingraydefinition}
Let~$\operatorname{Aut}(TM)$ be the automorphism bundle of the tangent bundle.
A section~$B$ of this bundle, called an automorphism field, is a field whose value~$B(x)$ at any $x\in M$ is an automorphism (a linear self-bijection) of~$T_xM$.
In local coordinates~$B$ can be expressed as
\begin{equation}\label{eq:automorphism}
B(x)
=
B^j_k(x)\der x^k\otimes\partial_j
\end{equation}
where~$B^j_k(x)$ is an invertible matrix at every point~$x$.

Let $A_i$, $i=1,\dots,m$, be smooth automorphism fields.
Their tensor product $A=A_1\otimes\dots\otimes A_m$ is a mapping of tensor fields, $A\colon \X(T_mM)\rightarrow \X(T_mM)$.
From an invariant point of view it operates on a tensor field $f\in\X(T_mM)$ as
\begin{equation}\label{eq:mixingdefinition}
(Af)_x(v_1,\dots,v_m) = f_x(A_1(x)v_1,\dots,A_m(x)v_m),
\end{equation}
and it can be written in local coordinates as
\begin{equation}
(Af)_{i_1\dotso i_m}(x)=(A_1)^{j_1}_{i_1}(x)\dotso (A_m)^{j_m}_{i_m}(x)f_{j_1\dotso j_m}(x).
\end{equation}
Since each~$A_i(x)$ (or~$(A_i)^j_k(x)$) is invertible and~$A_i$ is smooth also~$A$ is invertible and smooth.
We call such map~$A$ an \emph{admissible mixing of degree~$m$}.

Let $r\colon\mathcal{F}(TM)\rightarrow\mathcal{F}(\Omega)$ be the restriction to $\Omega=\bigcup_{x\in M}\Omega_x\subset TM$ where $\Omega_x\subset T_xM$ and $\lambda_r=r\circ\lambda$. Let~$Z$ be a vector space and $J\colon \mathcal{F}(\Omega) \to Z$ a linear mapping.
We define the abstract ray transform $I_{A, r}\colon \X(T_mM)\rightarrow Z$ as
\begin{equation}
\label{eq:abstractmixingraytransform}
I_{A,r} = J \circ \lambda_r \circ A.
\end{equation}
Usually~$r$ is the restriction to~$SM$ and~$J$ is the geodesic ray transform on~$SM$. We call~$I_{A,r}$ \emph{the mixing ray transform} when these assumptions hold and write $\lambda \coloneqq \lambda_r$ and $I_A\coloneqq I_{A, r}$ to simplify our notation.
Next, we decompose the space~$\X(T_mM)$ into symmetric and antisymmetric parts with respect to $\lambda_r\circ A$.
Assume we have the decomposition $\ker(\lambda_r)\oplus \ker(\lambda_r)^\perp=\X(T_mM)$.
Since~$A$ is bijective linear map, we have $\ker(\lambda_r\circ A)=A^{-1}(\ker(\lambda_r))$ and
\begin{equation}\label{eq:decompositionlambdarandA}
\begin{split}
\X(T_mM)
&=
A^{-1}(\ker(\lambda_r)\oplus \ker(\lambda_r)^\perp)
\\&=
\ker(\lambda_r\circ A)\oplus A^{-1}(\ker(\lambda_r)^\perp).
\end{split}
\end{equation}
Hence we choose $A^{-1}(\ker(\lambda_r)^\perp)$ as the space of $(\lambda_r\circ A)$-symmetric tensor fields. The symmetrization map~$\widehat{\sigma}_{A, r}$ is a projection onto $A^{-1}(\ker(\lambda_r)^\perp)$ and it has the expression $\widehat{\sigma}_{A, r}=A^{-1}\circ\widehat{\sigma}_r\circ A$ where~$\widehat{\sigma}_r$ is a projection onto~$\ker(\lambda_r)^\perp$. 

One can also naturally define the mixing ray transform on a quotient space as a mapping
\begin{equation}
    I^q_{A, r}\colon \X(T_mM)/\ker(\lambda_r\circ A)\rightarrow Z
\end{equation}
such that
\begin{equation}
\label{eq:quotienttransform}
I_{A, r}^q [f]=I_{A, r}f,
\end{equation}
where $[f]\in\X(T_mM)/\ker(\lambda_r\circ A)$ is the equivalence class of $f\in\X(T_mM)$.
The transform~$I^q_{A, r}$ is well-defined, i.e. it does not depend on the representative.

We conclude this subsection with the following theorem which basically says that it is enough to know the properties of one mixing ray transform since any other mixing ray transform can be reduced to the known case.

\begin{thm}
\label{thm:linalg}
Let $E_1, E_2, E_3\subset\X(T_mM)$ be subspaces and $m\geq 1$. Assume that~$A$ and~$\widetilde{A}$ are admissible mixings of degree~$m$ and let $\mathcal{D}= A^{-1} \circ \widetilde{A}$. Then the following properties hold:
\begin{enumerate}[(a)]
\item \label{item:kernelcharacterization}
\textbf{Kernel characterization:}
Let $\mathcal{H}=\id-\widehat{\sigma}_{A, r}$ and $Y=\ker(I_{\widetilde{A}, r})\cap \widetilde{A}
^{-1}(\ker(\lambda_r)^\perp)$.
Then $f\in\ker(I_{A, r})$ if and only if 
$f=\mathcal{H}f+\mathcal{D}w$ for some $w\in Y$.
We have the decomposition
\begin{equation}
\ker(I_{A, r})=\im(\mathcal{H})\oplus\im(\mathcal{D}|_Y)=\ker(\lambda_r\circ A)\oplus\im(\mathcal{D}|_Y)
\end{equation}
where $\ker(I_{A, r}), \im(\mathcal{H}), \im(\mathcal{D}|_Y)\subset \X(T_mM)$.
\smallskip
\item\label{item:reconstruction}
\textbf{Reconstruction:}
Let $\mathcal{R}_{\widetilde{A},r}\colon Z \to S$ be a left inverse of $I_{\widetilde{A},r}\colon S \to Z$, where $S \subset \widetilde{A}^{-1}(\ker(\lambda_r)^\perp)$.
Then $\mathcal{R}_{A,r}= \mathcal{D} \circ \mathcal{R}_{\widetilde{A},r}\colon Z \to \mathcal{D}(S)$ is a left inverse of $I_{A,r}\colon \mathcal{D}(S) \to Z$ where $\mathcal{D}(S) \subset A^{-1}(\ker(\lambda_r)^\perp)$.
\smallskip
\item\label{item:stabilitygeodesictransform}
\textbf{Stability:}
Let $(Z,\norm{\cdot}_Z)$ and $(E_1,\norm{\cdot}_{E_1})$ be normed spaces.
Also assume that~$\mathcal{D}$ is bounded on $(E_1, \norm{\cdot}_{E_1})$ and that the estimate $\norm{f}_{E_1} \leq C\norm{I_{\widetilde{A},r}f}_Z$ holds for some subset $S^\prime\subset \widetilde{A}^{-1}(\ker(\lambda_r)^\perp)$.
Then the estimate 
\begin{equation}
\norm{f}_{E_1} \leq C\norm{\mathcal{D}}_{E_1}\norm{I_{A,r}f}_Z
\end{equation}
holds for all $f \in \mathcal{D}(S^\prime)\subset A^{-1}(\ker(\lambda_r)^\perp)$. 
\smallskip
\item \label{item:adjoint}
\textbf{Adjoint and normal operator:}
Let $(Z, \sis{\cdot, \cdot}_Z)$ and $(E_2, \sis{\cdot, \cdot}_{E_2})$ be Hilbert spaces. 
Assume that~$\mathcal{D}^{-1}$ is bounded in $(E_2, \sis{\cdot, \cdot}_{E_2})$ and that $I_{\widetilde{A},r}\colon E_2 \to Z$ is bounded.
Then the adjoints and the normal operators of~$I_{A,r}$ and~$I_{\widetilde{A},r}$ satisfy the formulas
\begin{equation}
    I_{A,r}^* = (\mathcal{D}^{-1})^*I_{\widetilde{A},r}^*,
    \quad
    N_{A,r} = (\mathcal{D}^{-1})^*N_{\widetilde{A},r}\mathcal{D}^{-1}.
\end{equation}
\smallskip
\item\label{item:stabilitynormaloperators}
\textbf{Stability with normal operators:}
Suppose that the assumptions of~(\ref{item:adjoint}) hold and let $\norm{\cdot}_{E_3}$ be a norm on~$E_3$.
Assume also that~$\mathcal{D}^*$ is bounded in $(E_3, \norm{\cdot}_{E_3})$ and that the estimate $\norm{f}_{E_2} \leq C\norm{N_{\widetilde{A},r}f}_{E_3}$ holds for some subset $S^{\prime\prime}\subset\widetilde{A}^{-1}(\ker(\lambda_r)^\perp)$.
Then the estimate
\begin{equation}
    \norm{f}_{E_2} \leq C\norm{\mathcal{D}}_{E_2}\norm{\mathcal{D}^*}_{E_3} \norm{N_{A,r}f}_{E_3}
\end{equation}
holds for all $f \in \mathcal{D}(S^{\prime\prime})\subset A^{-1}(\ker(\lambda_r)^\perp)$.
\end{enumerate}
\end{thm}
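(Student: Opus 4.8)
The plan is to reduce every part to the single intertwining identity
\[
I_{A,r} = I_{\widetilde{A},r}\circ\mathcal{D}^{-1},
\qquad\text{equivalently}\qquad
I_{\widetilde{A},r} = I_{A,r}\circ\mathcal{D},
\]
which is immediate from $I_{A,r}=J\circ\lambda_r\circ A$, $I_{\widetilde{A},r}=J\circ\lambda_r\circ\widetilde{A}$ and $\widetilde{A}=A\circ\mathcal{D}$. I would also record two bijections of subspaces induced by $\mathcal{D}=A^{-1}\widetilde{A}$: since $\widetilde{A}$ is bijective, $\mathcal{D}(\ker(\lambda_r\circ\widetilde{A}))=A^{-1}(\ker(\lambda_r))=\ker(\lambda_r\circ A)$ and $\mathcal{D}(\widetilde{A}^{-1}(\ker(\lambda_r)^\perp))=A^{-1}(\ker(\lambda_r)^\perp)$. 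Finally, I note that $\mathcal{H}=\id-\widehat{\sigma}_{A,r}$ is the projection onto $\ker(\widehat{\sigma}_{A,r})=\ker(\lambda_r\circ A)$, because $\widehat{\sigma}_{A,r}f=0$ iff $Af\in\ker(\lambda_r)$.

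For part~(\ref{item:kernelcharacterization}) I would first show $\ker(I_{\widetilde{A},r})=\ker(\lambda_r\circ\widetilde{A})\oplus Y$: using the decomposition $\X(T_mM)=\ker(\lambda_r\circ\widetilde{A})\oplus\widetilde{A}^{-1}(\ker(\lambda_r)^\perp)$, split any $g\in\ker(I_{\widetilde{A},r})$; the first summand already lies in $\ker(I_{\widetilde{A},r})$, hence so does the second, which therefore lies in $Y$. Now $f\in\ker(I_{A,r})$ iff $\mathcal{D}^{-1}f\in\ker(I_{\widetilde{A},r})$, so $\mathcal{D}^{-1}f=u+w$ with $u\in\ker(\lambda_r\circ\widetilde{A})$ and $w\in Y$, whence $f=\mathcal{D}u+\mathcal{D}w$ with $\mathcal{D}u\in\ker(\lambda_r\circ A)$ and $\mathcal{D}w\in A^{-1}(\ker(\lambda_r)^\perp)$ by the two bijections. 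Applying $\widehat{\sigma}_{A,r}$ then isolates $\widehat{\sigma}_{A,r}f=\mathcal{D}w$ and $\mathcal{H}f=\mathcal{D}u$, which is the claimed formula $f=\mathcal{H}f+\mathcal{D}w$. The reverse implication follows by applying $I_{A,r}$ termwise: $\lambda_r A\mathcal{H}f=0$ and $I_{A,r}\mathcal{D}w=I_{\widetilde{A},r}w=0$. Directness of $\im(\mathcal{H})\oplus\im(\mathcal{D}|_Y)$ comes from $\ker(\lambda_r\circ A)\cap A^{-1}(\ker(\lambda_r)^\perp)=\{0\}$ together with injectivity of $\mathcal{D}$, and $\im(\mathcal{H})=\ker(\lambda_r\circ A)$ as noted.

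Parts~(\ref{item:reconstruction})--(\ref{item:stabilitynormaloperators}) are short consequences. For~(\ref{item:reconstruction}), $\mathcal{D}(S)\subseteq A^{-1}(\ker(\lambda_r)^\perp)$ by the bijection, and for $f=\mathcal{D}s$ with $s\in S$ we get $\mathcal{R}_{A,r}I_{A,r}f=\mathcal{D}\,\mathcal{R}_{\widetilde{A},r}I_{\widetilde{A},r}s=\mathcal{D}s=f$ using $I_{A,r}\mathcal{D}=I_{\widetilde{A},r}$. For~(\ref{item:stabilitygeodesictransform}), with $f=\mathcal{D}s$, $s\in S^\prime$, and $I_{\widetilde{A},r}s=I_{A,r}f$, submultiplicativity gives $\norm{f}_{E_1}\le\norm{\mathcal{D}}_{E_1}\norm{s}_{E_1}\le C\norm{\mathcal{D}}_{E_1}\norm{I_{A,r}f}_Z$. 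For~(\ref{item:adjoint}), $I_{A,r}=I_{\widetilde{A},r}\circ\mathcal{D}^{-1}$ is bounded as a composition of bounded maps, so $I_{A,r}^*=(\mathcal{D}^{-1})^*I_{\widetilde{A},r}^*$ and $N_{A,r}=I_{A,r}^*I_{A,r}=(\mathcal{D}^{-1})^*N_{\widetilde{A},r}\mathcal{D}^{-1}$. For~(\ref{item:stabilitynormaloperators}), inverting this identity yields $N_{\widetilde{A},r}=\mathcal{D}^*N_{A,r}\mathcal{D}$, so for $f=\mathcal{D}s$ with $s\in S^{\prime\prime}$ one has $\norm{f}_{E_2}\le\norm{\mathcal{D}}_{E_2}\norm{s}_{E_2}\le C\norm{\mathcal{D}}_{E_2}\norm{N_{\widetilde{A},r}s}_{E_3}=C\norm{\mathcal{D}}_{E_2}\norm{\mathcal{D}^*N_{A,r}f}_{E_3}\le C\norm{\mathcal{D}}_{E_2}\norm{\mathcal{D}^*}_{E_3}\norm{N_{A,r}f}_{E_3}$.

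The only step that is not purely mechanical is the bookkeeping in part~(\ref{item:kernelcharacterization}): one must match the decompositions of $\X(T_mM)$ at play---by $\widehat{\sigma}_r$, by $\widehat{\sigma}_{\widetilde{A},r}$, and by $\widehat{\sigma}_{A,r}$---correctly under $\mathcal{D}$, and verify that $Y$ is precisely the ``nontrivial'' slice $\ker(I_{\widetilde{A},r})\cap\widetilde{A}^{-1}(\ker(\lambda_r)^\perp)$ of the kernel. Everything else follows from the intertwining identity and submultiplicativity of operator norms, so I expect no serious difficulty.
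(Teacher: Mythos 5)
Your proposal is correct and follows essentially the same route as the paper: everything is reduced to the intertwining identity $I_{A,r}=I_{\widetilde{A},r}\circ\mathcal{D}^{-1}$ together with the fact that $\mathcal{D}$ carries the $\widetilde{A}$-decomposition of $\X(T_mM)$ onto the $A$-decomposition. In part (a) you first split $\ker(I_{\widetilde{A},r})=\ker(\lambda_r\circ\widetilde{A})\oplus Y$ and transport it by $\mathcal{D}$, whereas the paper directly exhibits $w=\mathcal{D}^{-1}\widehat{\sigma}_{A,r}f=\widehat{\sigma}_{\widetilde{A},r}\mathcal{D}^{-1}f$; this is only a reorganization of the same argument.
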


\begin{proof}
(\ref{item:kernelcharacterization})
If~$f$ is of the form $f=\mathcal{H}f+\mathcal{D}w$ for some $w\in Y$, then clearly $f\in\ker(I_{A, r})$.
For the converse, let $w=\widehat{\sigma}_{\widetilde{A}, r}\mathcal{D}^{-1}f=\mathcal{D}^{-1}\widehat{\sigma}_{A, r}f$.
We can write 
\begin{equation}
f=(f-\widehat{\sigma}_{A, r}f)+\widehat{\sigma}_{A, r}f=\mathcal{H}f+\mathcal{D}\mathcal{D}^{-1}\widehat{\sigma}_{A, r}f=\mathcal{H}f+\mathcal{D}w.
\end{equation}
Clearly $w\in\widetilde{A}^{-1}(\ker(\lambda_r)^\perp)$ and 
\begin{equation}
I_{\widetilde{A}, r}w=I_{\widetilde{A}, r}\mathcal{D}^{-1}\widehat{\sigma}_{A, r}f=I_{A, r}\widehat{\sigma}_{A, r}f=I_{A, r}f=0
\end{equation}
so $w\in\ker(I_{\widetilde{A}, r})$.
Assume then that $f\in\im(\mathcal{H})\cap\im(\mathcal{D}|_Y)$.
Now $f\in\ker(\lambda_r\circ A)$ and $f=\mathcal{D}w$ where $w\in\widetilde{A}^{-1}(\ker(\lambda_r)^\perp)$.
But this implies $w=\mathcal{D}^{-1}f\in\ker(\lambda_r\circ\widetilde{A})$ and hence $w=0$.
Therefore $\im(\mathcal{H})\cap\im(\mathcal{D}|_Y)=\{0\}$.
\medskip

(\ref{item:reconstruction})
Clearly $\mathcal{D}(S)\subset\mathcal{D}(\widetilde{A}^{-1}(\ker(\lambda_r)^\perp))=A^{-1}(\ker(\lambda_r)^\perp)$.
Let $f\in\mathcal{D}(S)$.
Then 
\begin{equation}
\mathcal{D}\mathcal{R}_{\widetilde{A}, r}I_{A, r}f=\mathcal{D}\mathcal{R}_{\widetilde{A}, r}I_{\widetilde{A}, r}\mathcal{D}^{-1}f=\mathcal{D}\mathcal{D}^{-1}f=f
\end{equation}
implying that $\mathcal{D}\circ\mathcal{R}_{\widetilde{A}, r}$ is a left inverse of~$I_{A, r}$ on~$\mathcal{D}(S)$.
\medskip

(\ref{item:stabilitygeodesictransform})
For $f\in\mathcal{D}(S^\prime)$ we find
\begin{align}
\norm{f}_{E_1}&=\norm{\mathcal{D}\mathcal{D}^{-1}f}_{E_1}\leq\norm{\mathcal{D}}_{E_1}\norm{\mathcal{D}^{-1}f}_{E_1}\leq C\norm{\mathcal{D}}_{E_1}\norm{I_{\widetilde{A}, r}\mathcal{D}^{-1}f}_Z \\
&=C\norm{\mathcal{D}}_{E_1}\norm{I_{A, r}f}_Z
\end{align}
\medskip
as claimed.

(\ref{item:adjoint})
Using the definitions of adjoints, we obtain
\begin{align}
\sis{I_{A, r}f, h}_Z=\sis{I_{\widetilde{A}, r}\mathcal{D}^{-1}f, h}_Z=\sis{\mathcal{D}^{-1}f, I_{\widetilde{A}, r}^*h}_{E_2}=\sis{f, (\mathcal{D}^{-1})^*I_{\widetilde{A}, r}^* h}_{E_2}.
\end{align}
Hence $I_{A, r}^*=(\mathcal{D}^{-1})^*I_{\widetilde{A}, r}^*$ and the normal operator becomes 
\begin{equation}
N_{A, r}=I_{A, r}^*I_{A, r}=(\mathcal{D}^{-1})^*I_{\widetilde{A}, r}^*I_{\widetilde{A}, r}\mathcal{D}^{-1}=(\mathcal{D}^{-1})^* N_{\widetilde{A}, r}\mathcal{D}^{-1}.
\end{equation}

(\ref{item:stabilitynormaloperators})
If $f\in\mathcal{D}(S^{\prime\prime})$, then we have
\begin{align}
\norm{f}_{E_2}&\leq \norm{\mathcal{D}}_{E_2}\norm{\mathcal{D}^{-1}f}_{E_2}\leq C\norm{\mathcal{D}}_{E_2}\norm{N_{\widetilde{A}, r}\mathcal{D}^{-1}f}_{E_3} \\
&=C\norm{\mathcal{D}}_{E_2}\norm{\mathcal{D}^*(\mathcal{D}^{-1})^*N_{\widetilde{A}, r}\mathcal{D}^{-1}f}_{E_3} \\
&\leq C\norm{\mathcal{D}}_{E_2}\norm{\mathcal{D}^*}_{E_3}\norm{N_{A, r}f}_{E_3}. \qedhere
\end{align}
\end{proof}

\subsection{Solenoidal injectivity}\label{sec:solenoidalinjectivity}

In this section we analyze more closely the kernel characterization given in theorem~\ref{thm:linalg}(\ref{item:kernelcharacterization}). Specifically, we apply our methods to show s-injectivity of general mixing ray transforms when s-injectivity of the geodesic ray transform is known. We also use our approach to show that the earlier results about the kernel of the mixed ray transform on compact simple surfaces and the kernel of the light ray transform on static globally hyperbolic Lorentzian manifolds can be seen as s-injectivity results under correct notions of symmetry.

\subsubsection{General results}
\label{subsubsection:generalsinjectivity}
Let~$r$ be the restriction to~$SM$ and $J=I_{SM}$ so that $I=I_{SM}\circ\lambda_r$ and $I_{A, r}=I\circ A$. Since now $\ker(\lambda_r)=\ker(\lambda)$ we use an abuse of notation and write $\lambda\coloneqq \lambda_r$.
By proposition~\ref{prop:V1V2V3} we can choose $\ker(\lambda)^\perp=S_mM$ so that $\widehat{\sigma}_A=A^{-1}\circ\sigma\circ A$ is a projection onto~$A^{-1}(S_mM)$.
Further, we define the covariant derivative $\nabla^A=A^{-1}\circ\nabla$.
The derivative~$\nabla^A$ is natural for the transform~$I_A$ since if $v|_{\partial M}=0$, then $I_A(\widehat{\sigma}_A\nabla^Av)=0$.

We say that the mixing ray transform~$I_A$ is s-injective on a compact Riemannian manifold with boundary if the following property holds for all $f\in C^\infty(T_mM)$: $I_Af=0$ if and only if $\widehat{\sigma}_Af=\widehat{\sigma}_A\nabla^A u$ for some $u\in C^\infty(S_{m-1}M)$ vanishing on the boundary.
S-injectivity allows one to decompose the kernel of~$I_A$ as
\begin{equation}
\ker(I_A|_{C^\infty(T_mM)})=\im(\mathcal{H}|_{C^\infty(T_mM)})\oplus\im(\widehat{\sigma}_A\nabla^A|_Y)
\end{equation}
where $\mathcal{H}=\id-\widehat{\sigma}_A$ and
\begin{equation}
Y=\{u\in C^\infty(S_{m-1}M): u|_{\partial M}=0\}.
\end{equation}
It follows that s-injectivity of any mixing ray transform implies s-injectivity for all mixing ray transforms.

\begin{cor}
\label{cor:sinjectivity}
Let $m\geq 1$ and $(M, g)$ be a compact Riemannian manifold with boundary so that the transform~$I_A$ is s-injective for some~$A$ of degree~$m$. Then~$I_{\widetilde{A}}$ is s-injective for all~$\widetilde{A}$ of degree~$m$. 
\end{cor}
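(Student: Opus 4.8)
The plan is to reduce s-injectivity of $I_{\widetilde{A}}$ to that of $I_A$ by conjugating every relevant operator with $\mathcal{D}=A^{-1}\circ\widetilde{A}$, making precise the informal remark just before the statement. First I would record the algebraic identities that drive the conjugation. Since $I_A=I\circ A$ and $I_{\widetilde{A}}=I\circ\widetilde{A}$, we have $I_{\widetilde{A}}=I_A\circ\mathcal{D}$. From $\widehat{\sigma}_A=A^{-1}\circ\sigma\circ A$ and $\widehat{\sigma}_{\widetilde{A}}=\widetilde{A}^{-1}\circ\sigma\circ\widetilde{A}$ one gets the intertwining $\widehat{\sigma}_A\circ\mathcal{D}=A^{-1}\sigma\widetilde{A}=\mathcal{D}\circ\widehat{\sigma}_{\widetilde{A}}$, and from $\nabla^A=A^{-1}\circ\nabla$, $\nabla^{\widetilde{A}}=\widetilde{A}^{-1}\circ\nabla$ one gets $\widehat{\sigma}_A\nabla^A=A^{-1}\sigma\nabla=\mathcal{D}\circ\widehat{\sigma}_{\widetilde{A}}\nabla^{\widetilde{A}}$ as maps on $C^\infty(S_{m-1}M)$. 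I would also note that $\mathcal{D}$ and $\mathcal{D}^{-1}$ are smooth admissible mixings of degree~$m$, hence bijections of $C^\infty(T_mM)$ that do not touch the potential $u$ or its boundary condition, so no regularity or boundary-value subtleties arise.

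Granting these identities the argument is short. For the forward implication, let $f\in C^\infty(T_mM)$ with $I_{\widetilde{A}}f=0$; then $I_A(\mathcal{D}f)=0$ with $\mathcal{D}f\in C^\infty(T_mM)$, so s-injectivity of $I_A$ provides $u\in C^\infty(S_{m-1}M)$ with $u|_{\partial M}=0$ and $\widehat{\sigma}_A(\mathcal{D}f)=\widehat{\sigma}_A\nabla^A u$. Rewriting both sides with the intertwining identities gives $\mathcal{D}\,\widehat{\sigma}_{\widetilde{A}}f=\mathcal{D}\,\widehat{\sigma}_{\widetilde{A}}\nabla^{\widetilde{A}}u$, and applying $\mathcal{D}^{-1}$ yields $\widehat{\sigma}_{\widetilde{A}}f=\widehat{\sigma}_{\widetilde{A}}\nabla^{\widetilde{A}}u$, which is precisely the s-injective form for $I_{\widetilde{A}}$. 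For the converse, $\lambda\circ\sigma=\lambda$ gives $I\circ\sigma=I$, hence $I_{\widetilde{A}}\circ\widehat{\sigma}_{\widetilde{A}}=I_{\widetilde{A}}$; so if $\widehat{\sigma}_{\widetilde{A}}f=\widehat{\sigma}_{\widetilde{A}}\nabla^{\widetilde{A}}u$ with $u|_{\partial M}=0$, then $I_{\widetilde{A}}f=I_{\widetilde{A}}\widehat{\sigma}_{\widetilde{A}}\nabla^{\widetilde{A}}u=0$ by the fundamental theorem of calculus, exactly as already observed for $I_A$ before the statement.

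An alternative route is to invoke Theorem~\ref{thm:linalg}(\ref{item:kernelcharacterization}) with the roles of $A$ and $\widetilde{A}$ exchanged, so that $\mathcal{D}$ there becomes $\mathcal{D}^{-1}$: s-injectivity of $I_A$ identifies the set $Y$ of that theorem with $\im(\widehat{\sigma}_A\nabla^A|_{\{u:\,u|_{\partial M}=0\}})$, and pushing this description through $\mathcal{D}^{-1}$ with the same intertwining identities turns the kernel decomposition of $I_{\widetilde{A}}$ into the s-injective form. I do not expect a genuine obstacle here; the proof is purely algebraic, and the only point requiring care is bookkeeping — keeping straight which factor of $A$ or $\widetilde{A}$ sits on which side of each identity, and confirming that $\mathcal{D}$ preserves smoothness and the boundary condition so that one and the same potential $u$ serves for both transforms.
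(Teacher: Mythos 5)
Your proposal is correct and follows essentially the same route as the paper: the paper's proof writes $I_{\widetilde{A}}f=I_A(A^{-1}\widetilde{A}f)$, applies the s-injectivity of $I_A$ as an equivalence, and uses exactly the intertwining identities $\widehat{\sigma}_A\circ\mathcal{D}=A^{-1}\sigma\widetilde{A}=\mathcal{D}\circ\widehat{\sigma}_{\widetilde{A}}$ and $\widehat{\sigma}_A\nabla^A=A^{-1}\sigma\nabla=\mathcal{D}\circ\widehat{\sigma}_{\widetilde{A}}\nabla^{\widetilde{A}}$ that you record, cancelling $\mathcal{D}$ to pass between the two s-injectivity statements with the same potential $u$. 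Your explicit verification of the converse direction via the fundamental theorem of calculus is slightly more detail than the paper gives, but it is the same argument.
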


\begin{proof}
Let us denote $\widehat{\sigma}_A=A^{-1}\sigma A$ and $\widehat{\sigma}_{\widetilde{A}}=\widetilde{A}^{-1}\sigma \widetilde{A}$ for the projections.
Using the solenoidal injectivity for~$I_A$ we easily obtain
\begin{align}
I_{\widetilde{A}}f=I_A(A^{-1}\widetilde{A}f)=0 &\Leftrightarrow \exists u \in Y:  \widehat{\sigma}_AA^{-1}\widetilde{A}f=\widehat{\sigma}_A\nabla^Au \\ &\Leftrightarrow  \exists u \in Y:\widehat{\sigma}_{\widetilde{A}}f=\widehat{\sigma}_{\widetilde{A}}\nabla^{\widetilde{A}}u.\qedhere
\end{align}
\end{proof}

We immediately obtain the following corollary from the previous corollary.

\begin{cor}
\label{cor:generalsinjectivity}
Take any $m\geq1$.
Assume that $(M, g)$ is a compact Riemannian manifold with boundary so that the geodesic ray transform is s-injective on $m$-tensor fields.
Then~$I_A$ is s-injective for all~$A$ of degree~$m$.
\end{cor}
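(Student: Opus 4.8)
The plan is to obtain this statement as an immediate specialization of corollary~\ref{cor:sinjectivity}. First I would observe that the geodesic ray transform~$I$ is itself a mixing ray transform: taking $A_i = \id$ for all $i = 1,\dots,m$ gives an admissible mixing $A = \id \otimes \dots \otimes \id$ of degree~$m$, and with this choice $I_A = I \circ A = I$. Under the hypothesis that~$I$ is s-injective on $m$-tensor fields, this precisely says that the mixing ray transform~$I_A$ is s-injective for this particular~$A$ of degree~$m$ — one needs only to check that the abstract notion of s-injectivity for~$I_A$ introduced in section~\ref{sec:solenoidalinjectivity} reduces to the usual notion of s-injectivity for~$I$ when $A = \id$. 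This is a matter of unwinding definitions: when $A = \id$ we have $\widehat{\sigma}_A = \sigma$ and $\nabla^A = \nabla$, so the condition ``$\widehat{\sigma}_A f = \widehat{\sigma}_A \nabla^A u$ for some $u \in C^\infty(S_{m-1}M)$ vanishing on the boundary'' becomes ``$\sigma f = \sigma \nabla u$ for some such~$u$'', which is equivalent to the solenoidal decomposition condition $f = \sigma f$ when $f$ is itself symmetric, matching the definition of s-injectivity of~$I$ recalled in section~\ref{subsec:geodesicraytransform}.

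Having verified that~$I_A$ is s-injective for $A = \id$, I would then invoke corollary~\ref{cor:sinjectivity} directly: it asserts that if~$I_A$ is s-injective for some~$A$ of degree~$m$, then~$I_{\widetilde{A}}$ is s-injective for all~$\widetilde{A}$ of degree~$m$. Applying this with our $A = \id$ yields s-injectivity of~$I_{\widetilde{A}}$ for every admissible mixing~$\widetilde{A}$ of degree~$m$, which is the claim, after renaming~$\widetilde{A}$ to~$A$.

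I do not expect any genuine obstacle here — the corollary is essentially a one-line deduction from corollary~\ref{cor:sinjectivity}, and the only point requiring a moment of care is the bookkeeping identification of the two notions of s-injectivity when $A = \id$, together with checking that the identity mixing genuinely satisfies the definition of an admissible mixing of degree~$m$ (each $\id\colon T_xM \to T_xM$ is trivially a linear isomorphism and the resulting section of $\operatorname{Aut}(TM)$ is smooth). No additional hypotheses on~$(M,g)$ beyond compactness with boundary are needed, since those are exactly the standing assumptions under which corollary~\ref{cor:sinjectivity} is stated.
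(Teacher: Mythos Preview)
Your proposal is correct and takes essentially the same approach as the paper, which simply states that the corollary follows immediately from corollary~\ref{cor:sinjectivity}. Your additional verification that the identity mixing is admissible and that the abstract s-injectivity notion reduces to the standard one when $A=\id$ just makes explicit what the paper leaves implicit.
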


We have similar results for the mixing ray transform in the quotient space $\X(T_mM)/\ker(\lambda\circ A)$.
We denote by~$[\cdot]_A$ the corresponding equivalence classes and say that the quotient transform defined as $I_A^q[f]_A=I_A f$ (see section~\ref{sec:mixingraydefinition}) is s-injective if for all $[f]_A\in C^\infty(T_mM)/\ker(\lambda\circ A)$ we have $I^q_A [f]_A=0$ if and only if $[f]_A=[\nabla^A u]_A$ for some $u\in C^\infty (S_{m-1}M)$ vanishing on the boundary.

\begin{cor}
Let $m\geq 1$ and~$A$ be a mixing of degree~$m$. Assume that $(M, g)$ is a compact Riemannian manifold with boundary.
Then~$I_A$ is s-injective if and only if~$I^q_A$ is s-injective.
\end{cor}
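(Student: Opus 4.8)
The plan is to observe that the two s-injectivity statements are the very same condition, written once for tensor fields and once for their classes modulo $\ker(\lambda\circ A)$. Two facts drive this. First, by construction $I^q_A[f]_A = I_A f$, and $\ker(\lambda\circ A)\subset\ker(I_A)$ (since $I_A = I_{SM}\circ\lambda\circ A$), so $I^q_A[f]_A = 0$ is equivalent to $I_A f = 0$ for any representative $f$. Second, $\widehat{\sigma}_A = A^{-1}\circ\sigma\circ A$ is a projection with $\ker(\widehat{\sigma}_A) = A^{-1}(\ker\sigma) = A^{-1}(\ker\lambda) = \ker(\lambda\circ A)$, using proposition~\ref{prop:V1V2V3}; hence for all $g, h\in\X(T_mM)$ we have $[g]_A = [h]_A$ if and only if $\widehat{\sigma}_A g = \widehat{\sigma}_A h$. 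In particular $[f]_A = [\nabla^A u]_A$ if and only if $\widehat{\sigma}_A f = \widehat{\sigma}_A\nabla^A u$.

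Before the main argument I would record two auxiliary cancellations: $I_A f = I_A\widehat{\sigma}_A f$ for every $f$ (because $f - \widehat{\sigma}_A f\in\ker(\widehat{\sigma}_A)\subset\ker(I_A)$), and $I_A\nabla^A u = I_A\widehat{\sigma}_A\nabla^A u = 0$ whenever $u|_{\partial M} = 0$, the last equality being exactly the naturality of $\nabla^A$ for $I_A$ noted above. I would also remark that $A$, $A^{-1}$, $\sigma$ and $\nabla$ all preserve smoothness, so $\widehat{\sigma}_A$ and $\nabla^A$ do too; thus $C^\infty(T_mM)/\ker(\lambda\circ A)$ consists of classes with smooth representatives and all the identities above hold verbatim at the $C^\infty$ level.

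Then I would prove both implications by unwinding the definitions. Suppose $I_A$ is s-injective: for $[f]_A$ with $I^q_A[f]_A = 0$, pick a smooth representative $f$, deduce $I_A f = 0$, apply s-injectivity to obtain $\widehat{\sigma}_A f = \widehat{\sigma}_A\nabla^A u$ with $u\in C^\infty(S_{m-1}M)$ vanishing on $\partial M$, and translate this to $[f]_A = [\nabla^A u]_A$; conversely $[f]_A = [\nabla^A u]_A$ gives $I^q_A[f]_A = I_A\nabla^A u = 0$. The reverse implication is symmetric: from $I_A f = 0$ we get $I^q_A[f]_A = 0$, hence $[f]_A = [\nabla^A u]_A$, hence $\widehat{\sigma}_A f = \widehat{\sigma}_A\nabla^A u$; and $\widehat{\sigma}_A f = \widehat{\sigma}_A\nabla^A u$ yields $I_A f = I_A\widehat{\sigma}_A f = I_A\widehat{\sigma}_A\nabla^A u = 0$. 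There is no genuine obstacle here; the only point requiring care is that $\ker(\widehat{\sigma}_A)$ equals $\ker(\lambda\circ A)$ rather than merely containing it — so that $[g]_A = [h]_A \Leftrightarrow \widehat{\sigma}_A g = \widehat{\sigma}_A h$ is a true equivalence and not just an implication — which is precisely where proposition~\ref{prop:V1V2V3} is used.
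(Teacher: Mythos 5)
Your proof is correct and follows essentially the same route as the paper's: both directions are obtained by unwinding the two definitions of s-injectivity, using that $\widehat{\sigma}_A=A^{-1}\circ\sigma\circ A$ is a projection with $\ker(\widehat{\sigma}_A)=\ker(\lambda\circ A)$, so that passing between $[f]_A=[\nabla^Au]_A$ and $\widehat{\sigma}_Af=\widehat{\sigma}_A\nabla^Au$ is an equivalence. Your version merely makes explicit (via proposition~\ref{prop:V1V2V3}) the kernel identification that the paper uses implicitly; no gap.
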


\begin{proof}
Assume first that~$I_A$ is s-injective.
We obtain
\begin{equation}
I_A f=I^q_A[f]_A=0\Leftrightarrow \widehat{\sigma}_A f=\widehat{\sigma}_A\nabla^A u
\end{equation}
which in turn implies
\begin{equation}
[f]_A=[\widehat{\sigma}_A f]_A=[\widehat{\sigma}_A\nabla^A u]_A=[\nabla^Au]_A.
\end{equation}
Assume then that~$I^q_A$ is s-injective.
Now 
\begin{equation}
I^q_A [f]_A=I_A f=0\Leftrightarrow [f]_A=[\nabla^A u]_A
\end{equation}
which implies $f-\nabla^Au=h\in\ker(\lambda\circ A)$.
Hence $\widehat{\sigma}_A f=\widehat{\sigma}_A\nabla^Au$.
\end{proof}

The previous results imply that if~$I^q_A$ is s-injective for some~$A$ of degree $m\geq 1$, then~$I^q_{\widetilde{A}}$ is s-injective for all~$\widetilde{A}$ of degree~$m$.
We remark that if $A=\id$, then~$I^q_A$ corresponds to the geodesic ray transform in the quotient space $\X(T_mM)/\ker(\lambda)$.
Especially, s-injectivity of~$I$ on $m$-tensor fields implies s-injectivity for~$I_A^q$ where~$A$ is any mixing of degree~$m$.

\subsubsection{Mixed ray transform on compact simple surfaces}
Let us then consider the mixed ray transform $L_{k, l}=I_{SM}\circ\lambda\circ A_{k, l}$ on a compact simple surface $(M, g)$ where~$A_{k, l}$ is defined via equation~\eqref{eq:mixedraytwodimensional}.
Define the operators $\lambda^\prime w=\sigma_{k, l}(g\otimes w)$ and $\der^\prime u=\sigma_l\nabla u$ where~$\sigma_l$ is the symmetrization with respect to the last~$l$ indices and~$\sigma_{k, l}$ is the symmetrization with respect to the first~$k$ and the last~$l$ indices.
In coordinates
\begin{align}
(\lambda^\prime w)_{i_1\dotso i_k j_1\dotso j_l}&=\sigma_{k, l}(g_{i_1 j_1}w_{i_2,\dotso i_k j_2\dotso j_l}) \\
(\der^\prime u)_{i_1\dotso i_kj_1\dotso j_l}&=\sigma_l((\nabla_{e_{j_1}}u)_{i_1\dotso i_kj_2\dotso j_l}).
\end{align}
We compare our approach to the kernel characterization done in~\cite{dHSZ18}.
Especially, we obtain the following alternative result for s-injectivity.

\begin{cor}
\label{cor:mixedraysimplesurface}
Let $(M, g)$ be two-dimensional compact simple Riemannian manifold and $f\in C^\infty(T_{k+l}M)$.
Then $L_{k, l}f=0$ if and only if $\widehat{\sigma}_{A_{k, l}}f=\widehat{\sigma}_{A_{k ,l}}\sigma_l\nabla u$ where $u\in C^\infty(S_kM\otimes S_{l-1}M)$ such that $u|_{\partial M}=0$.
\end{cor}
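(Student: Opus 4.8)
The plan is to recognize that $L_{k,l}$ is the mixing ray transform $I_{A_{k,l}}$ on the orientable surface $M$, as established in section~\ref{sec:definitionofmixedray} via equation~\eqref{eq:mixedraytwodimensional}, and then to invoke the general machinery of section~\ref{sec:solenoidalinjectivity}. The key input is that on a compact simple surface the geodesic ray transform $I$ is s-injective on tensor fields of every order, which is the theorem of Paternain--Salo--Uhlmann~\cite{PSU-tensor-tomography-on-simple-surfaces} quoted in the introduction. By corollary~\ref{cor:generalsinjectivity}, s-injectivity of $I$ on $(k+l)$-tensor fields immediately yields s-injectivity of $I_{A_{k,l}}$ in the sense of section~\ref{subsubsection:generalsinjectivity}: namely $L_{k,l}f=0$ if and only if $\widehat{\sigma}_{A_{k,l}}f=\widehat{\sigma}_{A_{k,l}}\nabla^{A_{k,l}}v$ for some $v\in C^\infty(S_{k+l-1}M)$ vanishing on $\partial M$. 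So the first step is simply to write this down.

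The substantive work is then to translate this abstract statement into the concrete form claimed in the corollary, involving $\sigma_l\nabla u$ with $u\in C^\infty(S_kM\otimes S_{l-1}M)$ and $u|_{\partial M}=0$. There are two things to reconcile. First, the abstract potential term $\nabla^{A_{k,l}}v = A_{k,l}^{-1}\nabla v$ must be rewritten: applying $A_{k,l}$ to both sides of $\widehat{\sigma}_{A_{k,l}}f=\widehat{\sigma}_{A_{k,l}}A_{k,l}^{-1}\nabla v$ and using $A_{k,l}\circ\widehat{\sigma}_{A_{k,l}}=\sigma\circ A_{k,l}$ reduces the problem to the equality $\sigma(A_{k,l}f)=\sigma\nabla v$ in $S_{k+l}M$. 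Second, and this is where the geometry of $A_{k,l}=\star^{\otimes k}\otimes\id^{\otimes l}$ enters, one must identify which covectors $v$ actually appear: since $f$ and hence $A_{k,l}f$ is symmetric in the first $k$ and last $l$ slots separately, and since $\star$ is an isometric isomorphism commuting with parallel transport, the natural potentials are not arbitrary symmetric $(k+l-1)$-tensors but those of the block-symmetric form coming from $S_kM\otimes S_{l-1}M$. The cleanest route is to run the reduction in the block-symmetric category from the start: replace the ambient space $\X(T_{k+l}M)$ by $S_kM\otimes S_{l-1}M$-type spaces and the operators $\lambda,\nabla$ by $\lambda',\mathrm d'$ as defined just before the corollary, observing that $A_{k,l}$ intertwines the $L_{k,l}$-picture with the ordinary geodesic ray transform restricted to block-symmetric tensors, and that $\mathrm d'u=\sigma_l\nabla u$ is exactly the $A_{k,l}$-pullback of the solenoidal gradient in that setting. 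Then the kernel of $L_{k,l}$ on $S_kM\otimes S_lM$ is characterized by $\widehat\sigma_{A_{k,l}}f=\widehat\sigma_{A_{k,l}}\sigma_l\nabla u$.

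Concretely I would: (i) recall $L_{k,l}=I\circ A_{k,l}$ and that $M$ simple implies $M$ orientable, so $A_{k,l}$ is globally defined; (ii) cite~\cite{PSU-tensor-tomography-on-simple-surfaces} for s-injectivity of $I$ on all orders; (iii) apply corollary~\ref{cor:generalsinjectivity} (equivalently corollary~\ref{cor:sinjectivity} starting from $I=I_{\id}$) to get $L_{k,l}f=0\iff\widehat{\sigma}_{A_{k,l}}f=\widehat{\sigma}_{A_{k,l}}\nabla^{A_{k,l}}v$ with $v|_{\partial M}=0$; (iv) show that one may take $v$ of the block-symmetric form, i.e. $v=\sigma_{k,l-1}$-type coming from some $u\in C^\infty(S_kM\otimes S_{l-1}M)$, by projecting onto the block-symmetric subspace and using that $A_{k,l}$, the divergence, and symmetrization all preserve it; (v) unwind $\nabla^{A_{k,l}}=A_{k,l}^{-1}\nabla$ and the identity $\widehat\sigma_{A_{k,l}}=A_{k,l}^{-1}\sigma A_{k,l}$ to arrive at $\widehat{\sigma}_{A_{k,l}}f=\widehat{\sigma}_{A_{k,l}}\sigma_l\nabla u$. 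The main obstacle is step (iv): verifying that the potential can be chosen block-symmetric with the correct internal symmetry type, which amounts to checking that the solenoidal decomposition on $S_{k+l}M$ restricts to the subspace $S_kM\otimes S_lM$ in a way compatible with $\mathrm d'$ — this is essentially the content of the decomposition discussed in~\cite{dHSZ18, SHA-integral-geometry-tensor-fields}, and one must make sure the boundary condition $u|_{\partial M}=0$ survives the projection.
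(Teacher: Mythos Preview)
Your route diverges from the paper's in an essential way, and the divergence hides a real gap.

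The paper does \emph{not} go through corollary~\ref{cor:generalsinjectivity}. Instead, for the forward direction it observes that $\widehat{\sigma}_{A_{k,l}}f\in A_{k,l}^{-1}(C^\infty(S_mM))\subset C^\infty(S_kM\otimes S_lM)$, applies \cite[Theorem~1]{dHSZ18} to $\widehat{\sigma}_{A_{k,l}}f$ directly to get $\widehat{\sigma}_{A_{k,l}}f=\sigma_l\nabla u+\sigma_{k,l}(g\otimes w)$, and then notes that $\sigma_{k,l}(g\otimes w)\in\ker(\lambda\circ A_{k,l})$, so it disappears under $\widehat{\sigma}_{A_{k,l}}$. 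The converse is the fundamental theorem of calculus, as you say. In other words, corollary~\ref{cor:mixedraysimplesurface} is expressly a \emph{translation} of the dHSZ result into the paper's language; the independent reproof via corollary~\ref{cor:generalsinjectivity} is a separate statement, corollary~\ref{cor:sinjectivitymixed}, with a different potential operator $\nabla^{A_{k,l}}$ and a different space $Y$ of potentials.

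Your step~(iv) is precisely the bridge between these two distinct characterizations, and it is not a matter of projecting $v$ onto a block-symmetric subspace: $v\in C^\infty(S_{m-1}M)$ is already fully symmetric, hence already in $S_kM\otimes S_{l-1}M$, so that projection is the identity. The actual obstruction is that the two potential operators are different maps: $\widehat{\sigma}_{A_{k,l}}\nabla^{A_{k,l}}v=A_{k,l}^{-1}\sigma\nabla v$ involves $\star$ on the first $k$ slots, whereas $\widehat{\sigma}_{A_{k,l}}\sigma_l\nabla u=A_{k,l}^{-1}\sigma A_{k,l}\sigma_l\nabla u$ does not act by $\star$ on $u$ at all. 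Showing that $\im(\widehat{\sigma}_{A_{k,l}}\nabla^{A_{k,l}}|_Y)=\im(\widehat{\sigma}_{A_{k,l}}\sigma_l\nabla|_{Y'})$ is exactly the content of the remark following the corollary, and the paper obtains it only \emph{a posteriori} by having both decompositions in hand. Your plan to establish it by ``projecting and using that $A_{k,l}$, the divergence, and symmetrization all preserve'' block symmetry does not work, and you end up deferring to \cite{dHSZ18} anyway---at which point the detour through corollary~\ref{cor:generalsinjectivity} is superfluous.
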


\begin{proof}
Assume that $L_{k, l}f=0$. Since
\begin{equation}
    \widehat{\sigma}_{A_{k, l}}f\in A^{-1}_{k, l}(C^\infty (S_mM))\subset C^\infty(S_kM\otimes S_lM)
\end{equation}
we obtain $L_{k ,l}\widehat{\sigma}_{A_{k ,l}}f=L_{k, l}f=0$.
By \cite[Theorem 1]{dHSZ18} we have $\widehat{\sigma}_{A_{k ,l}}f=\sigma_l\nabla u+\sigma_{k, l}(g\otimes w)$ for some $u\in C^\infty (S_kM\otimes S_{l-1}M)$, $u|_{\partial M}=0$, and $w\in C^\infty (S_{k-1}M\otimes S_{l-1}M)$.
Now $\sigma_{k, l}(g\otimes w)\in\ker(\lambda\circ A_{k, l})$ and hence $\widehat{\sigma}_{A_{k ,l}}f=\widehat{\sigma}_{A_{k ,l}}\sigma_l\nabla u$. 

Then assume that $\widehat{\sigma}_{A_{k ,l}}f=\widehat{\sigma}_{A_{k ,l}}\sigma_l\nabla u$ for some~$u$ vanishing on the boundary.
Since $L_{k, l}=I_{SM}\circ\lambda\circ A_{k ,l}$ and $\lambda\circ\sigma=\lambda$ we obtain
\begin{equation}
L_{k, l}f=L_{k ,l}\widehat{\sigma}_{A_{k ,l}}f=(I_{SM}\circ\lambda\circ A_{k, l})(A^{-1}_{k, l}\sigma A_{k, l}\sigma_l\nabla u)=L_{k ,l}\sigma_l\nabla u=0
\end{equation}
where the last equality follows from the fundamental theorem of calculus.
\end{proof}

\begin{remark}
The previous s-injectivity result is similar to what we obtained earlier, i.e. $I_{A_{k, l}}f=0$ if and only if $\widehat{\sigma}_{A_{k ,l}}f=\widehat{\sigma}_{A_{k ,l}}\nabla^{A_{k, l}}u$ for some $u\in C^\infty (S_{m-1}M)$ vanishing on the boundary.
We thus have the following alternative characterizations of the kernel of the mixed ray transform
\begin{align}
\ker(L_{k, l}|_{C^\infty(T_mM)})&=\im(\mathcal{H}|_{C^\infty(T_mM)})\oplus \im(\widehat{\sigma}_{A_{k, l}}\nabla^A|_Y) \\
\ker(L_{k, l}|_{C^\infty(T_mM)})&=\im(\mathcal{H}|_{C^\infty(T_mM)})\oplus \im(\widehat{\sigma}_{A_{k, l}}\der^\prime|_{Y^\prime})
\end{align}
where $\mathcal{H}=\id-\widehat{\sigma}_{A_{k, l}}$ and
\begin{align}
Y&=\{u\in C^\infty (S_{m-1}M):u|_{\partial M}=0\}\\
Y^\prime&=\{u\in C^\infty (S_kM\otimes S_{l-1}M): u|_{\partial M}=0\}.
\end{align}
Compare these to the decomposition of the kernel in~\cite{dHSZ18}
\begin{equation}
\label{eq:teemusdecomposition}
\ker(L_{k, l}|_{C^\infty (S_kM\otimes S_lM)})=\im(\lambda^\prime|_{C^\infty (S_{k-1}M\otimes S_{l-1}M)})+\im(\der^\prime|_{Y^\prime}).
\end{equation}
Our decompositions split any tensor field uniquely into the trivial part and non-trivial part of the kernel.
The uniqueness of decomposition~\eqref{eq:teemusdecomposition} is not known and it only applies to tensor fields with certain symmetries.
\end{remark}

\subsubsection{Light ray transform on Lorentzian manifolds}\label{subsec:lightraytransform}
We quickly review the relevant definitions for the light ray transform on static globally hyperbolic Lorentzian manifolds.
More details can be found in~\cite{FIO-light-ray}.

Let $(\mathcal{N},\overline{g})$ be a smooth globally hyperbolic Lorentzian manifold of dimension $1+n$ with signature $(-,+,\dots,+)$.
Let~$\beta$ be a maximal light-like geodesic so that
\begin{equation}
    \overline{\nabla}_{\dot{\beta}(s)}\dot{\beta}(s) = 0,\quad \overline{g}(\dot{\beta}(s),\dot{\beta}(s))=0
\end{equation}
where~$\overline{\nabla}$ is the covariant derivative with respect to~$\overline{g}$.
We define the light ray transform of $f \in C_c^\infty(T_m\mathcal{N})$ as
\begin{equation}\label{eq:lighraytransform}
    \mathcal{L}_\beta f = \int_{-\infty}^{\infty}(\lambda f)(\beta(s),\dot{\beta}(s))\der s,
\end{equation}
where~$\beta$ ranges over all lightlike geodesics of~$\mathcal{N}$.
Since $(\mathcal{N}, \overline{g})$ is globally hyperbolic, there exists a Cauchy hypersurface $N\subset \mathcal{N}$, i.e. a hypersurface such that any causal curve intersects~$N$ exactly once.
We define $g = \overline{g}|_N$; note that $(N, g)$ becomes a Riemannian manifold.
We will focus on static Lorentzian manifolds.
It follows that if~$\mathcal{N}$ is static, then for any Cauchy hypersurface $N \subset \mathcal{N}$ there exists an isometric embedding $\Phi\colon \R \times N \to \mathcal{N}$ so that $\Phi^* \overline{g} = -\kappa \der t^2 + g$ where~$\kappa$ is a smooth positive function on~$N$. We let $g_c=\kappa^{-1}g$.

Let~$r$ be the restriction to the set
\begin{equation}
\Omega=\{(x, v)\in T\mathcal{M}: \overline{g}_x(v, v)=0\}
\end{equation}
where $\mathcal{M}=\Phi(\R\times M)$, $M\subset N$ is a compact submanifold with smooth boundary and $\lambda_r=r\circ\lambda$ as before.
We define the quotient light ray transform~$\mathcal{L}_\beta^q$ in $C^\infty_c(T_m\mathcal{M})/\ker(\lambda_r)$ as $\mathcal{L}_\beta^q[f]=\mathcal{L}_\beta f$; note that the definition does not depend on the representative. We obtain the following s-injectivity result for~$\mathcal{L}_{\beta}^q$.

\begin{cor}
\label{cor:lorentzinjectivity}
Let $(\mathcal{N}, \overline{g})$ be static globally hyperbolic Lorentzian manifold of dimension $1+n$ and let $N\subset\mathcal{N}$ be a fixed Cauchy hypersurface.
Let $\mathcal{M}=\Phi(\R\times M)$ where $M\subset N$ is a compact $n$-dimensional submanifold with smooth boundary and~$\Phi$ is the isometric embedding introduced earlier.
Assume that the geodesic ray transform is s-injective on $(M, g_c)$ and let $[f]\in C_c^{\infty}(T_m\mathcal{M})/\ker(\lambda_r)$. Then $\mathcal{L}_{\beta}^q[f]=0$ for all maximal~$\beta$ in $(\mathcal{M}, \overline{g})$ if and only if $[f]=[\overline{\nabla}T]$ for some $T\in C_c^{\infty}(S_{m-1}\mathcal{M})$.
\end{cor}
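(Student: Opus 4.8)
The plan is to reduce the statement to the kernel characterization of the light ray transform on static globally hyperbolic Lorentzian manifolds obtained in~\cite{FIO-light-ray}, and then to reinterpret that characterization in the quotient $C_c^\infty(T_m\mathcal{M})/\ker(\lambda_r)$. The guiding point, as advertised in the introduction, is that the ``conformal gauge'' which appears in the Lorentzian kernel is precisely what gets absorbed into $\ker(\lambda_r)$ once $r$ is the restriction to the null cone bundle $\Omega$, so that the statement becomes formally the same as the Riemannian s-injectivity results of section~\ref{sec:solenoidalinjectivity}.

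First I would dispose of the ``if'' direction. Since $\mathcal{L}_\beta$ factors through $\lambda_r$ — the velocity $\dot\beta(s)$ is lightlike, so $(\beta(s),\dot\beta(s))\in\Omega$ — it suffices to show $\mathcal{L}_\beta(\overline{\nabla}T)=0$ for $T\in C_c^\infty(S_{m-1}\mathcal{M})$. Along a lightlike geodesic one has $\overline{\nabla}_{\dot\beta}\dot\beta=0$, hence $(\lambda\,\overline{\nabla}T)(\beta(s),\dot\beta(s))=(\overline{\nabla}_{\dot\beta}T)(\dot\beta(s)^{\otimes(m-1)})=\tfrac{d}{ds}\big(T_{\beta(s)}(\dot\beta(s)^{\otimes(m-1)})\big)$, and the fundamental theorem of calculus together with the compact support of $T$ (which vanishes near $\partial\mathcal{M}$, in particular at the endpoints of each maximal $\beta$) gives $\mathcal{L}_\beta(\overline{\nabla}T)=0$, i.e. $\mathcal{L}_\beta^q[\overline{\nabla}T]=0$.

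For the converse, suppose $\mathcal{L}_\beta f=0$ for all maximal lightlike $\beta$ in $(\mathcal{M},\overline{g})$. Here I would invoke the kernel characterization of the light ray transform from~\cite{FIO-light-ray}, which is available precisely because the geodesic ray transform is assumed solenoidally injective on the optical metric $(M,g_c)$ — roughly speaking,~\cite{FIO-light-ray} uses the static structure and Fermat's principle to reduce the light ray transform to (weighted) geodesic ray transforms on $(M,g_c)$, which I would cite rather than reprove. That characterization gives
\[
\sigma f=\sigma\overline{\nabla}T+\sigma(\overline{g}\otimes w)
\]
for some $T\in C_c^\infty(S_{m-1}\mathcal{M})$ and some $w\in C_c^\infty(S_{m-2}\mathcal{M})$, the $w$-term being absent when $m=1$. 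It then remains to pass to the quotient. By proposition~\ref{prop:V1V2V3} we have $\ker(\sigma)=\ker(\lambda)\subset\ker(\lambda_r)$, so $f-\overline{\nabla}T$ is congruent modulo $\ker(\lambda_r)$ to $\sigma f-\sigma\overline{\nabla}T=\sigma(\overline{g}\otimes w)$; and $\sigma(\overline{g}\otimes w)\in\ker(\lambda_r)$ because $(\sigma(\overline{g}\otimes w))_x(v^{\otimes m})=\overline{g}_x(v,v)\,w_x(v^{\otimes(m-2)})=0$ for every lightlike $v$. Hence $f-\overline{\nabla}T\in\ker(\lambda_r)$, i.e. $[f]=[\overline{\nabla}T]$.

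I expect the only delicate points to be bookkeeping rather than analysis, since the hard work — the reduction to $(M,g_c)$ and the kernel description — is imported from~\cite{FIO-light-ray}. The two things to get right are: (i) verifying that the conformal term $\sigma(\overline{g}\otimes w)$ lies in $\ker(\lambda_r)$, which is exactly the ``absorption of the conformal gauge'' and is the whole content of the reinterpretation; and (ii) the slightly degenerate case $m=1$, where $\ker(\lambda_r)=\{0\}$ (the null covectors span $T_x^*\mathcal{M}$ in Lorentzian signature) and the assertion simply reads $f=\overline{\nabla}T$. One should also confirm that the regularity and support class of the potential $T$ furnished by~\cite{FIO-light-ray} is indeed $C_c^\infty(S_{m-1}\mathcal{M})$.
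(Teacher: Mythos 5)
Your argument is correct and follows essentially the same route as the paper: the ``if'' direction is the fundamental theorem of calculus for potentials along lightlike geodesics, and the converse imports the kernel characterization $\sigma f=\sigma\overline{\nabla}T+\sigma(\overline{g}\otimes w)$ from~\cite{FIO-light-ray} and observes that the conformal term lies in $\ker(\lambda_r)$ since $\overline{g}_x(v,v)=0$ on the null cone, while $\ker(\sigma)=\ker(\lambda)\subset\ker(\lambda_r)$ handles the symmetrization. Your explicit verification of point (i) is exactly the step the paper relies on implicitly, so nothing is missing.
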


\begin{proof}
Assume that $\mathcal{L}_{\beta}^q[f]=0$. Then $\mathcal{L}_\beta(\sigma f)=\mathcal{L}_\beta f=0$ and by \cite[Theorem 2]{FIO-light-ray} we obtain $\sigma f=\sigma\overline{\nabla}T+\sigma(g\otimes U)$ for some $T\in C^{\infty}_c(S_{m-1}\mathcal{M})$ and $U\in C^{\infty}_c(S_{m-2}\mathcal{M})$. Hence 
\begin{equation}
[f]=[\sigma f]=[\sigma\overline{\nabla}T]+[\sigma (g\otimes U)]=[\overline{\nabla}T]
\end{equation}
where we used the fact that $\ker(\lambda)\subset\ker(\lambda_r)$ and $\sigma (g\otimes U)\in\ker(\lambda_r)$. This gives the other direction of the claim. Assume then that $[f]=[\overline{\nabla}T]$ for some $T\in C^\infty_c(S_{m-1}\mathcal{M})$. The fundamental theorem of calculus implies that $\mathcal{L}_\beta^q[f]=\mathcal{L}_\beta^q[\overline{\nabla}T]=\mathcal{L}_\beta(\sigma\overline{\nabla}T)=0$. This concludes the proof. \qedhere

\end{proof}

\begin{remark}
One can realize the quotient space $\X(T_m\mathcal{M})/\ker(\lambda_r)$ as a complementary subspace $V_r\subset\X(T_m\mathcal{M})$ which satisfies $\ker(\lambda_r)\oplus V_r=\X(T_m\mathcal{M})$. This can be done for example by taking the orthogonal complement $V_r=\ker(\lambda_r)^\perp$ with respect to a Riemannian metric on~$\mathcal{M}$ (see section~\ref{sec:decomp}). Then corollary~\ref{cor:lorentzinjectivity} implies that we have the decomposition
\begin{equation}
\ker(\mathcal{L}_{\beta}|_{C_c^{\infty}(T_m\mathcal{M})})=\im(\mathcal{H}|_{C^\infty_c(T_m\mathcal{M})})\oplus\im(\widehat{\sigma}_r\overline{\nabla}|_{C_c^{\infty}(S_{m-1}\mathcal{M})})
\end{equation}
where~$\widehat{\sigma}_r$ is the orthogonal projection onto~$\ker(\lambda_r)^\perp$ and $\mathcal{H}=\id-\widehat{\sigma}_r$.
\end{remark}

\subsection{Boundedness and pointwise estimates of mixings}

In this section we give sufficient conditions which imply pointwise norm estimates and continuity of~$A$ in Sobolev spaces.
Boundedness and pointwise estimates are used in section~\ref{sec:mainthms} to prove stability estimates and injectivity results for the mixed ray transform on two-dimensional orientable Riemannian manifolds.

\begin{lem}
\label{lemma:boundednessofmixedray}
Let $f\in C^q(T_mM)$ where $m\geq 1$ and $q\in\mathbb{N}$.
Then the following properties hold:
\smallskip
\begin{enumerate}[(a)]
    \item\label{item:boundedness1}
    If~$A_i$ satisfy the relation $\abs{A_i v}_{g_x}\leq C_i(x)\abs{v}_{g_x}$ for all $v\in T_x M$, then we have the pointwise estimate 
    \begin{equation}
    \abs{Af}_{g_x}\leq n^mC_1(x)\dotso C_m(x)\abs{f}_{g_x}.
    \end{equation}
    Especially, if $C_i=C_i(x)$ are all bounded, then~$A$ extends into a bounded mapping $A\colon L^2(T_mM)\rightarrow L^2(T_mM)$.
    \smallskip
    \item\label{item:sobolevboundedness}
    If in addition $\abs{\nabla_{e_j}A_i}_{g_x}\leq C^{\prime}_i(x)$ for any local frame $\{e_j\}$, then we have the pointwise estimate
    \begin{equation}
    \abs{\nabla(Af)}_{g_x}\leq C''(x)(\abs{f}_{g_x}+\abs{\nabla f}_{g_x})
    \end{equation}
    where $C''=C''(x)$ can be expressed in terms of~$C_i$ and~$C_i^{\prime}$.
    Especially, if $C_i, C^{\prime}_i$ are all bounded, then~$A$ extends into a bounded mapping $A\colon H^1(T_m M)\rightarrow H^1(T_mM)$.
    \smallskip
    \item\label{item:mixedisometry}
    If $(M, g)$ is a two-dimensional orientable Riemannian manifold, then the operator~$A_{k, l}$ defined in~\eqref{eq:mixedraytwodimensional}  satisfies
    \begin{align}
    \abs{\nabla^p(A_{k, l}f)}_{g_x}&=\abs{\nabla^p f}_{g_x}
    \end{align}
    for all $p \in \Nset$, $p\leq q$.
    In particular, the mixing~$A_{k, l}$ extends into an isometry $A_{k, l}\colon H^p(T_mM)\rightarrow H^p(T_mM)$ for all $p\in\Nset$.
    
\end{enumerate}
\end{lem}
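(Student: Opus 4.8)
The plan is to prove the three parts of Lemma~\ref{lemma:boundednessofmixedray} by working pointwise in local coordinates (or, better, in a local orthonormal frame) and then integrating. For part~(\ref{item:boundedness1}), I would write the components of $Af$ as $(Af)_{i_1\dots i_m} = (A_1)^{j_1}_{i_1}\dotsm(A_m)^{j_m}_{i_m} f_{j_1\dots j_m}$ and estimate $\abs{Af}_{g_x}$ by expressing each $A_i$ via its action on basis vectors. The cleanest route is to choose a local orthonormal frame $\{e_j\}$: then $\abs{A_i e_j}_{g_x} \le C_i(x)$ for each $j$, so the matrix entries of $A_i$ in this frame are bounded by $C_i(x)$, and a crude bound on the contracted product of $m$ such matrices against $f$ gives the factor $n^m C_1(x)\dotsm C_m(x)$. (One has $n$ choices for each summed index, hence $n^m$; the orthonormality makes $\abs{f}_{g_x}$ just the Euclidean norm of the component array.) Integrating the pointwise bound over $M$ with respect to $\der V_g$ and using boundedness of the $C_i$ immediately yields $A\colon L^2(T_mM)\to L^2(T_mM)$ bounded, and density of $C^\infty_{0,2}(T_mM)$ finishes the extension claim.

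For part~(\ref{item:sobolevboundedness}), I would differentiate $A = A_1\otimes\dots\otimes A_m$ using the Leibniz rule for the covariant derivative: $\nabla(Af)$ is a sum of $m+1$ terms, one with $\nabla f$ and the remaining $m$ with a single $\nabla_{e_j}A_i$ factor replacing $A_i$. Each term is estimated by part~(\ref{item:boundedness1})-style bounds together with the new hypothesis $\abs{\nabla_{e_j}A_i}_{g_x}\le C_i'(x)$; collecting the constants gives a pointwise bound $\abs{\nabla(Af)}_{g_x}\le C''(x)(\abs{f}_{g_x}+\abs{\nabla f}_{g_x})$ with $C''$ a polynomial expression in the $C_i, C_i'$. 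Combined with part~(\ref{item:boundedness1}) this controls $\norm{Af}_{1,2}$ by $\norm{f}_{1,2}$, and again density gives the bounded extension $A\colon H^1(T_mM)\to H^1(T_mM)$.

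For part~(\ref{item:mixedisometry}), the key observation is that on a two-dimensional oriented Riemannian manifold the Hodge-star rotation $\star$ (identified with $\sharp\star\flat$) is a pointwise isometry of $T_xM$ that is parallel, i.e. $\nabla\star = 0$. Since $A_{k,l}$ is built from $\star$ in the first $k$ slots and $\id$ in the last $l$ slots, $A_{k,l}$ acts fiberwise as a tensor product of orthogonal maps, hence is a fiberwise isometry: $\abs{A_{k,l}f}_{g_x} = \abs{f}_{g_x}$. Because $\star$ is parallel, the covariant derivative commutes with $A_{k,l}$ in the sense that $\nabla(A_{k,l}f) = A_{k,l}(\nabla f)$ (the Leibniz terms involving $\nabla\star$ vanish), and iterating gives $\nabla^p(A_{k,l}f) = A_{k,l}(\nabla^p f)$ for all $p\le q$; applying the fiberwise isometry again yields $\abs{\nabla^p(A_{k,l}f)}_{g_x} = \abs{\nabla^p f}_{g_x}$. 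Summing over $p$ and integrating shows $A_{k,l}$ preserves every $H^p$-norm, hence extends to an isometry $H^p(T_mM)\to H^p(T_mM)$.

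I expect the only genuinely delicate point to be bookkeeping in part~(\ref{item:sobolevboundedness}): making precise that "$\nabla$ acting on $A_1\otimes\dots\otimes A_m$, viewed as a section of an automorphism bundle" obeys a Leibniz rule with the stated $m+1$ terms, and checking that the hypothesis is frame-independent (one should note $\abs{\nabla_{e_j}A_i}_{g_x}\le C_i'(x)$ for a local orthonormal frame controls $\abs{\nabla A_i}$ as a $(1,2)$-tensor up to a dimensional constant). Everything else is a routine product estimate and an application of the parallelism of $\star$, which is already recorded in the preliminaries. The constants $n^m$ and $C''$ need not be sharp, so I would keep the estimates deliberately crude.
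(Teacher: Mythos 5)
Your proposal is correct and follows essentially the same route as the paper: pointwise component estimates in normal coordinates (equivalently, an orthonormal frame) for parts (a) and (b), a Leibniz-rule bookkeeping for (b), and the fiberwise orthogonality of $A_{k,l}$ for (c). The only presentational difference is in (c), where you invoke $\nabla\star=0$ explicitly to commute $A_{k,l}$ past $\nabla^p$, whereas the paper carries out the same cancellation via the component identities $\sum_j(A_i)^m_j(A_i)^q_j=\delta^m_q$ in normal coordinates; this makes explicit a fact the paper uses implicitly, but it is not a different argument.
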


\begin{proof}
(\ref{item:boundedness1}) Choose normal coordinates in a neighborhood of~$x$.
The boundedness assumption for~$A_i$ implies 
\begin{equation}
\abs{(A_i)^j_k(x)}\leq \bigg(\sum_{j=1}^n\abs{(A_i)^j_k(x)}^2\bigg)^{1/2}=\abs{A_ie_k}_{g_x}\leq C_i(x)
\end{equation}
where~$(A_i)^j_k$ are the components of~$A_i$ in these coordinates.
Now we can estimate the norm as
\begin{equation}
\begin{split}
\abs{Af}^2_{g_x}
&=
\sum_{i_1\dotso i_m=1}^n((Af)_{i_1\dotso i_m}(x))^2
\\&=
\sum_{i_1\dotso i_m=1}^n\bigg(\sum_{j_1\dotso j_m=1}^n(A_1)^{j_1}_{i_1}(x)\dotso (A_m)^{j_m}_{i_m}(x)f_{j_1\dotso j_m}(x)\bigg)^2 
\\&\leq
n^mC_1^2(x)\dotso C_m^2(x)\sum_{i_1\dotso i_m=1}^n\sum_{j_1\dotso j_m=1}^n\abs{f_{j_1\dotso j_m}(x)}^2
\\&\leq
n^{2m}C_1^2(x)\dotso C_m^2(x)\abs{f}_{g_x}^2.
\end{split}
\end{equation}
If~$C_i$ are all bounded, then $A\colon L^2(T_mM)\rightarrow L^2(T_mM)$ is bounded by definition and approximation by smooth tensor fields.

(\ref{item:sobolevboundedness}) By choosing normal coordinates, the covariant derivative at the point $x\in M$ reduces to the ordinary derivative.
Now
\begin{equation}
\abs{\nabla_{e_j} A_i}_{g_x}^2=\sum_{k, l=1}^n(\partial_j (A_i)^k_l(x))^2,
\end{equation}
which implies $\abs{\partial_j(A_i)^k_l(x)}\leq C^{\prime}_i(x)$.
Using the Leibniz rule we obtain
\begin{equation}
\begin{split}
\abs{\nabla (Af)}_{g_x}^2
&=
\sum_{k, i_1\dotso i_m=1}^n (\partial_k(Af)_{i_1\dotso i_m}(x))^2
\\&\leq
n^{2m+1}((C^{\prime}_1)^2(x)\dotso C^2_m(x)+\dotso +C_1^2(x)\dotso (C_m^{\prime})^2(x))\abs{f}_{g_x}^2
\\&\quad+
n^{2m}C_1^2(x)\dotso C_m^2(x)\abs{\nabla f}_{g_x}^2
\\&=
\widehat{C}(x)\abs{f}_{g_x}^2+\widetilde{C}(x)\abs{\nabla f}_{g_x}^2.
\end{split}
\end{equation}
By taking $C''(x)=\sqrt{2}\max\{\widehat{C}^{1/2}(x), \widetilde{C}^{1/2}(x)\}$ we get the desired inequality. If $C_i, C^{\prime}_i$ are all bounded, then $A\colon H^1(T_mM)\rightarrow H^1(T_mM)$ is bounded.

(\ref{item:mixedisometry}) Again using normal coordinates, one can calculate that
\begin{equation}
    g_x(A_{k, l}f, A_{k, l}f)=g_x(f, A^{-1}_{k, l}A_{k, l}f)=g_x(f, f),
\end{equation}
where we used the relations $(A_i)^j_m=-(A_i)^m_j$ for $i=1, \dotso k$, $(A_i)^j_m=\delta^j_m$ for $i=k+1, \dotso k+l$ and $(-1)^kA_{k, l}=A^{-1}_{k, l}$.
For the derivatives we get
\begin{equation}
    g_x(\nabla^p(Af), \nabla^p(Af))=g_x(\nabla^pf, \nabla^pf)
\end{equation}
using the fact that $\sum_{j=1}^n(A_i)^m_j(A_i)^q_j=\delta^m_q$ for all $i=1, \dotso k+l$. \qedhere
\end{proof}

\begin{remark}
In a similar fashion as in part~(\ref{item:sobolevboundedness}) one obtains the boundedness of $A\colon H^k(T_mM)\rightarrow H^k(T_mM)$ if one assumes boundedness of the derivatives up to order $k\in\mathbb{N}$, i.e. $\abs{\nabla_{\alpha}A_i}_{g_x}\leq C^{\alpha}_i(x)$ for all $\abs{\alpha}\leq k$ where $C^{\alpha}_i=C^{\alpha}_i(x)$ is bounded, $\nabla_{\alpha}=\nabla^{\alpha_1}_{e_1}\dotso\nabla^{\alpha_n}_{e_n}$ and $\abs{\alpha}=\alpha_1+\dotso +\alpha_n$.
\end{remark}

\section{The mixed and transverse ray transforms on two-dimensional orientable Riemannian manifolds}
\label{sec:mainthms}

\subsection{Solenoidal injectivity on compact and non-compact surfaces}

First we state a general result on s-injectivity of the mixed ray transform on compact orientable surfaces with boundary.
This follows from corollary~\ref{cor:generalsinjectivity}.
We use the notation introduced in section~\ref{subsubsection:generalsinjectivity}.
Note that $A_{k, l}f\in C^{p}(T_mM)$ whenever $f\in C^{p}(T_mM)$ where $p\in\mathbb{N}$.

\begin{cor}
\label{cor:sinjectivitymixed}
Let $m\geq 1$. Let $(M, g)$ be compact two-dimensional orientable Riemannian manifold with boundary such that the geodesic ray transform is s-injective on $C^{\infty}(S_mM)$ and let $f\in C^{\infty}(T_mM)$.
Then $L_{k, l}f=0$ if and only if $\widehat{\sigma}_{A_{k, l}}f=\widehat{\sigma}_{A_{k, l}}\nabla^{A_{k, l}} h$ for some $h\in C^{\infty}(S_{m-1}M)$ vanishing on the boundary~$\partial M$.
\end{cor}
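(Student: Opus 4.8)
The plan is to recognize corollary~\ref{cor:sinjectivitymixed} as a direct instance of the general machinery established in sections~\ref{sec:mixingraydefinition} and~\ref{subsubsection:generalsinjectivity}, and to apply corollary~\ref{cor:generalsinjectivity} with $A = A_{k,l}$, the mixing defined in~\eqref{eq:mixedraytwodimensional}. The key observation is that on a two-dimensional orientable Riemannian manifold we have the identity $L_{k,l} = I_{A_{k,l}} = I \circ A_{k,l}$ by~\eqref{eq:mixedraydefinition}, so the mixed ray transform is genuinely a mixing ray transform in the sense of section~\ref{sec:mixingraydefinition}. Hence the problem reduces to checking that all hypotheses of corollary~\ref{cor:generalsinjectivity} are met: $(M,g)$ is a compact Riemannian manifold with boundary (given), the geodesic ray transform is s-injective on $m$-tensor fields (given, here in the form s-injectivity on $C^\infty(S_mM)$ with $m = k+l$), and $A_{k,l}$ is an admissible mixing of degree $m$.

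First I would verify that $A_{k,l}$ is an admissible mixing. By construction $A_i = \star$ for $i = 1,\dots,k$ and $A_i = \id$ for $i = k+1,\dots,k+l$, and both $\star$ (the Hodge-star-induced rotation on the tangent bundle, well defined globally by orientability) and $\id$ are smooth automorphism fields of $TM$; indeed $\star$ has invertible components in any positively oriented orthonormal frame, with $\star^2 = -\id$. Therefore $A_{k,l} = A_1 \otimes \dots \otimes A_m$ is an admissible mixing of degree $m$ as required. Second, I would invoke corollary~\ref{cor:generalsinjectivity} verbatim: it states that s-injectivity of the geodesic ray transform on $m$-tensor fields implies $I_A$ is s-injective for every admissible $A$ of degree $m$, in particular for $A = A_{k,l}$. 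Unwinding the definition of s-injectivity of $I_{A_{k,l}}$ given in section~\ref{subsubsection:generalsinjectivity} — namely that $I_{A_{k,l}}f = 0$ if and only if $\widehat{\sigma}_{A_{k,l}}f = \widehat{\sigma}_{A_{k,l}}\nabla^{A_{k,l}}h$ for some $h \in C^\infty(S_{m-1}M)$ with $h|_{\partial M} = 0$ — and using $L_{k,l} = I_{A_{k,l}}$, yields exactly the stated equivalence.

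There is essentially no serious obstacle here; the content has already been done in the general theory, and the corollary is a translation. The one point requiring a remark (already flagged in the excerpt before the statement) is regularity preservation: one must note that $A_{k,l}$ maps $C^p(T_mM)$ into $C^p(T_mM)$, which is immediate since $\star$ is a smooth bundle automorphism, so the smooth class $C^\infty(T_mM)$ in which s-injectivity of $I$ is assumed is preserved under $A_{k,l}$ and $A_{k,l}^{-1} = (-1)^k A_{k,l}$, and hence the smooth potential $h$ produced by the geodesic-ray-transform s-injectivity result transfers back through the mixing without loss of regularity. Thus the proof is: check $A_{k,l}$ is an admissible mixing of degree $k+l$, apply corollary~\ref{cor:generalsinjectivity}, and restate the conclusion in the notation of the mixed ray transform.
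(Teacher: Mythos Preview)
Your proposal is correct and matches the paper's own approach exactly: the paper simply states that the corollary follows from corollary~\ref{cor:generalsinjectivity}, which is precisely what you do after verifying that $A_{k,l}$ is an admissible mixing and that $L_{k,l}=I_{A_{k,l}}$. Your added remarks on admissibility of $\star$ and regularity preservation are appropriate elaborations of that one-line reduction.
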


We note that the previous result holds on a wide class of two-dimensional orientable manifolds.
These include for example compact simple surfaces~\cite{PSU-tensor-tomography-on-simple-surfaces} and simply connected compact surfaces with strictly convex boundary and non-positive sectional curvature \cite{PS-sharp-stability-nonpositive-curvature, SHA-integral-geometry-tensor-fields}.
See \cite{IM:integral-geometry-review, PSU-tensor-tomography-progress} for more manifolds with s-injective geodesic ray transform.

We have the following corollary for the mixed ray transform on Cartan--Hadamard manifolds which is a simple consequence of the pointwise estimates for~$A_{k, l}$ and the results in~\cite{LRS-tensor-tomography-cartan-hadamard}.
We denote by~$K(x)$ the Gaussian curvature of $(M, g)$ at $x\in M$.

\begin{cor}
\label{cor:cartanhadamardinjectivity}
Let $(M, g)$ be a two-dimensional Cartan--Hadamard manifold and let $m\geq 1$.
The following claims are true:
\smallskip
\begin{enumerate}[(a)]
    \item\label{item:cartan1}
    Let $-K_0\leq K\leq 0$ for some $K_0>0$ and $f\in E^1_{\eta}(T_mM)$ for some $\eta>\frac{3}{2}\sqrt{K_0}$.
    Then $L_{k, l}f=0$ if and only if $ \widehat{\sigma}_{A_{k, l}}f=\widehat{\sigma}_{A_{k, l}}\nabla^{A_{k, l}} h$ for some $h\in S_{m-1}M$ such that $h\in E_{\eta-\epsilon}(T_{m-1}M)$ for all $\epsilon>0$. 
    \medskip
    \item\label{item:cartan2}
    Let $K\in P_{\kappa}(M)$ for some $\kappa>2$ and $f\in P^1_{\eta}(T_mM)$ for some $\eta>2$.
    Then $L_{k, l}f=0$ if and only if $ \widehat{\sigma}_{A_{k, l}}f=\widehat{\sigma}_{A_{k, l}}\nabla^{A_{k, l}} h$ for some $h\in S_{m-1}M\cap P_{\eta-1}(T_{m-1}M)$.
\end{enumerate}
\end{cor}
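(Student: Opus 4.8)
The plan is to deduce corollary~\ref{cor:cartanhadamardinjectivity} from corollary~\ref{cor:generalsinjectivity} (or directly from corollary~\ref{cor:sinjectivity}) by transporting the known s-injectivity of the geodesic ray transform on Cartan--Hadamard surfaces, established in~\cite{LRS-tensor-tomography-cartan-hadamard}, across the mixing $A_{k,l}$ defined in~\eqref{eq:mixedraytwodimensional}. The point is that on a two-dimensional orientable manifold $A_{k,l}$ is built from the Hodge rotation $\star$, so $\mathcal{D} = A_{k,l}^{-1}\circ\id = A_{k,l}^{-1}$ (taking $\widetilde{A} = \id$) and, by lemma~\ref{lemma:boundednessofmixedray}(\ref{item:mixedisometry}), $A_{k,l}$ is a pointwise isometry on tensors of every covariant-derivative order. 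Concretely I would: first recall that $L_{k,l} = I_{A_{k,l}} = I \circ A_{k,l}$ on an orientable surface (equations~\eqref{eq:mixedraytwodimensional}--\eqref{eq:mixedraydefinition}); second, invoke the s-injectivity of $I$ on the relevant decaying tensor fields from~\cite{LRS-tensor-tomography-cartan-hadamard} in each of the two curvature regimes; third, use the isometry property to check that $A_{k,l}$ preserves the weighted function spaces $E^1_\eta$, $E_\eta$, $P^1_\eta$, $P_\eta$ appearing in~\eqref{eq:cartanhadamardspaces}; fourth, run the reduction of corollary~\ref{cor:sinjectivity}/\ref{cor:generalsinjectivity} to pass the kernel characterization from $I$ to $I_{A_{k,l}} = L_{k,l}$.

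The reduction itself is purely algebraic and already packaged: if $L_{k,l}f = 0$ then $I(A_{k,l}f) = 0$, so $A_{k,l}f$ has solenoidal part equal to $\sigma\nabla h$ for a potential $h$ of one lower order vanishing (or decaying) appropriately; applying $A_{k,l}^{-1}$ and the identity $\widehat{\sigma}_{A_{k,l}} = A_{k,l}^{-1}\circ\sigma\circ A_{k,l}$ together with $\nabla^{A_{k,l}} = A_{k,l}^{-1}\circ\nabla$ gives $\widehat{\sigma}_{A_{k,l}}f = \widehat{\sigma}_{A_{k,l}}\nabla^{A_{k,l}}h$, and the converse direction is the fundamental theorem of calculus as in the proofs of corollaries~\ref{cor:sinjectivity} and~\ref{cor:mixedraysimplesurface}. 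The only genuinely new ingredient relative to the compact case is that on a Cartan--Hadamard manifold $I$ is not defined via a boundary but via~\eqref{eq:geodesicraycartanhadamard}, and s-injectivity holds only under decay hypotheses; so one must certify that $f \mapsto A_{k,l}f$ respects these hypotheses. This is exactly what lemma~\ref{lemma:boundednessofmixedray}(\ref{item:mixedisometry}) delivers: $\abs{\nabla^p(A_{k,l}f)}_{g_x} = \abs{\nabla^p f}_{g_x}$ pointwise for all $p$, hence $A_{k,l}$ maps each of $E_\eta^1$, $P_\eta^1$ bijectively onto itself (and likewise $E_{\eta-\epsilon}$, $P_{\eta-1}$), so the decay class of the potential $h$ produced by~\cite{LRS-tensor-tomography-cartan-hadamard} is preserved under $A_{k,l}^{-1}$.

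For part~(\ref{item:cartan1}) I would cite the s-injectivity result of~\cite{LRS-tensor-tomography-cartan-hadamard} valid when $-K_0 \le K \le 0$ and $f \in E^1_\eta$ with $\eta > \frac{3}{2}\sqrt{K_0}$, which yields a potential in $E_{\eta-\epsilon}$ for every $\epsilon>0$; apply it to $A_{k,l}f \in E^1_\eta(T_mM)$ (same class, by the isometry), extract $h$, and pull back. For part~(\ref{item:cartan2}) I would do the same with the polynomial-decay version of the theorem, $K \in P_\kappa(M)$ with $\kappa>2$ and $f \in P^1_\eta$ with $\eta>2$, producing $h \in S_{m-1}M \cap P_{\eta-1}(T_{m-1}M)$, and transport it across $A_{k,l}^{-1}$. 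In both cases the membership $\widehat{\sigma}_{A_{k,l}}f \in A_{k,l}^{-1}(S_mM) \subset C^1(T_mM)$ with the right decay needs a one-line check, just as the inclusion $\widehat{\sigma}_{A_{k,l}}f \in A^{-1}_{k,l}(C^\infty(S_mM))$ was noted in the proof of corollary~\ref{cor:mixedraysimplesurface}.

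The main obstacle I anticipate is bookkeeping rather than conceptual: matching the precise hypotheses and conclusions of the cited Cartan--Hadamard theorems in~\cite{LRS-tensor-tomography-cartan-hadamard} (which curvature decay, which $\eta$ threshold, which potential space, and whether the potential's decay is $E_{\eta-\epsilon}$ for all $\epsilon$ or something sharper) to the statement here, and confirming that those theorems are stated for general $m$-tensor fields rather than only symmetric ones — though since we may first project to $\widehat{\sigma}_{A_{k,l}}f$, which is symmetric, this causes no real difficulty. The isometry lemma removes any analytic subtlety about whether the mixing can worsen decay, so once the citations are lined up the argument is essentially a transcription of the compact proof with "vanishing on $\partial M$" replaced by the appropriate decay class.
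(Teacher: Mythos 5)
Your proposal is correct and follows essentially the same route as the paper: write $L_{k,l}=I\circ A_{k,l}$, use the pointwise isometry of lemma~\ref{lemma:boundednessofmixedray}(\ref{item:mixedisometry}) to verify that the (symmetrized) mixed field stays in the decay classes $E^1_\eta$ resp.\ $P^1_\eta$, apply the Cartan--Hadamard tensor tomography theorems of~\cite{LRS-tensor-tomography-cartan-hadamard} to get $\sigma A_{k,l}f=\sigma\nabla h$ with $h$ in the stated decay class, and pull back through $A_{k,l}^{-1}$ using $\widehat{\sigma}_{A_{k,l}}=A_{k,l}^{-1}\sigma A_{k,l}$ and $\nabla^{A_{k,l}}=A_{k,l}^{-1}\nabla$. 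The paper's proof is exactly this, with the symmetrization handled by the crude bound $\abs{\sigma A_{k,l}f}_{g_x}\leq (m!)^{1/2}\abs{A_{k,l}f}_{g_x}=(m!)^{1/2}\abs{f}_{g_x}$, which is the same one-line check you flag.
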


\begin{proof}
(\ref{item:cartan1}) If $f\in E^1_{\eta}(T_mM)$, then from the pointwise estimates for the transform~$A_{k, l}$ we obtain that 
\begin{equation}
\abs{\sigma A_{k, l} f}_{g_x}\leq (m!)^{1/2}\abs{A_{k, l}f}_{g_x}= (m!)^{1/2}\abs{f}_{g_x}\leq C
e^{-\eta d(x, o)}
\end{equation}
for some $C>0$ and
\begin{equation}
\abs{\nabla(\sigma A_{k, l} f)}_{g_x}\leq (m!)^{1/2}\abs{\nabla(A_{k, l} f)}_{g_x}= (m!)^{1/2}\abs{\nabla f}_{g_x}\leq C^{\prime}e^{-\eta d(x, o)}
\end{equation}
for some $C^{\prime}>0$.
Hence $\sigma A_{k, l} f\in S_mM\cap E^1_{\eta}(T_mM)$ for some $\eta>\frac{3}{2}\sqrt{K_0}$.
Since $I(\sigma A_{k, l}f)=L_{k, l}f=0$, we must have $\sigma A_{k, l} f=\sigma\nabla h$ for some $h\in S_{m-1}M$ where $h\in E_{\eta-\epsilon}(T_{m-1}M)$ for all $\epsilon>0$ by \cite[Theorem 1.1]{LRS-tensor-tomography-cartan-hadamard}.
This gives the claim for the first part.

(\ref{item:cartan2}) Similarly using the pointwise estimates one obtains that $\sigma A_{k, l} f\in S_mM\cap P^1_{\eta}(T_mM)$ for some $\eta>2$.
Now \cite[Theorem 1.2]{LRS-tensor-tomography-cartan-hadamard} implies that $\sigma A_{k, l} f=\sigma\nabla h$ for some $h\in S_{m-1}M\cap P_{\eta-1}(T_{m-1}M)$.
This proves the second part.
\end{proof}

\begin{remark}
One can study the mixed ray transform on asymptotically hyperbolic surfaces~\cite{GGSU-asymptotically-hyperbolic-manifolds}.
Let $(M,g)$ be an asymptotically hyperbolic surface, $\ol{M}$ the compactification of~$M$ and~$\rho$ a geodesic boundary defining function as defined in~\cite{GGSU-asymptotically-hyperbolic-manifolds}.
One usually assumes that $f \in \rho^{1-m} C^\infty(S_m\ol{M})$ to obtain s-injectivity results for the geodesic ray transform.
It then follows that $\sigma A_{k, l}f\in\rho^{1-m} C^{\infty}(S_m\overline{M})$ and similar s-injectivity result as in corollary~\ref{cor:sinjectivitymixed} holds under certain assumptions on $(M, g)$; we refer to~\cite{GGSU-asymptotically-hyperbolic-manifolds} for a more detailed discussion.
One can also study the mixed ray transform on asymptotically conic surfaces $(M^{\prime}, g^{\prime})$.
One obtains s-injectivity for tensor fields $f\in A_{k, l}^{-1}\rho^{\prime r}C^{\infty}(S_m^{sc}\overline{M^{\prime}})$ where $\rho^\prime$ is the boundary defining function, $r>n/2+1$ and $S_m^{sc}\overline{M^\prime}\subset S_m\overline{M^\prime}$ is the set of scattering tensor fields on the compactification~$\overline{M^\prime}$.
See~\cite{GLT-asymptotically-conic-spaces} for more details.
\end{remark}

\subsection{Stability results on compact surfaces}\label{sec:stability}

In this section we obtain stability estimates for the mixed ray transform.
We begin with the following lemma.

\begin{lem}\label{lemma:normaloperator}
Let $(M, g)$ be a compact simple surface.
Then the normal operator of the mixed ray transform~$L_{k,l}$ is $N_{k, l}=(-1)^kA_{k, l}N A_{k, l}$ where~$N$ is the normal operator of the geodesic ray transform~$I$ on $(k+l)$-tensor fields.
\end{lem}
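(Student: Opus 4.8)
The plan is to realize $N_{k,l}$ through the composition $L_{k,l}=I\circ A_{k,l}$ recorded in section~\ref{sec:definitionofmixedray} (the identity $L_{k,l}=I_{A_{k,l}}$), and then to identify the adjoint of the mixing $A_{k,l}$ with $(-1)^kA_{k,l}$. Concretely, I would apply theorem~\ref{thm:linalg}(\ref{item:adjoint}) with $\widetilde{A}=\id$ and $A=A_{k,l}$, so that $\mathcal{D}=A_{k,l}^{-1}$ and the normal operator formula reads $N_{k,l}=(\mathcal{D}^{-1})^*N\mathcal{D}^{-1}=A_{k,l}^*\,N\,A_{k,l}$; alternatively one just expands $N_{k,l}=L_{k,l}^*L_{k,l}=A_{k,l}^*I^*I A_{k,l}=A_{k,l}^*N A_{k,l}$ directly. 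Everything then reduces to showing $A_{k,l}^*=(-1)^kA_{k,l}$ on $L^2(T_{k+l}M)$.

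First I would check the hypotheses needed for the adjoints to make sense. On a compact simple surface the geodesic ray transform $I$ extends to a bounded operator on $L^2$ of tensor fields with respect to the scattering-invariant weighted measure on $\partial_{\mathrm{in}}SM$ (so that $N=I^*I$ is a well-defined bounded operator on $L^2(T_{k+l}M)$), and by lemma~\ref{lemma:boundednessofmixedray}(\ref{item:mixedisometry}) the mixing $A_{k,l}$ is a pointwise isometry, hence extends to a bounded — indeed unitary — operator on $L^2(T_{k+l}M)$. This supplies the two assumptions in theorem~\ref{thm:linalg}(\ref{item:adjoint}), namely boundedness of $\mathcal{D}^{-1}=A_{k,l}$ and of $I$.

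The key step is the identification of $A_{k,l}^*$. Since $A_{k,l}=\star^{\otimes k}\otimes\id^{\otimes l}$ and $\star$ is a $90^{\circ}$ rotation, each tensor factor is pointwise orthogonal, so $A_{k,l}$ is pointwise orthogonal and therefore $A_{k,l}^*=A_{k,l}^{-1}$ with respect to the $L^2(T_{k+l}M)$ inner product. Moreover $\star^2=-\id$ on one-forms gives $A_{k,l}^2=(-1)^k\id$, i.e. $A_{k,l}^{-1}=(-1)^kA_{k,l}$, which is exactly the relation already used in the proof of lemma~\ref{lemma:boundednessofmixedray}(\ref{item:mixedisometry}). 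Combining these, $A_{k,l}^*=(-1)^kA_{k,l}$, and substituting into $N_{k,l}=A_{k,l}^*N A_{k,l}$ yields $N_{k,l}=(-1)^kA_{k,l}N A_{k,l}$.

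I expect the only real subtlety to be bookkeeping about inner products and measures: one must make sure that ``pointwise orthogonal'' legitimately promotes to ``unitary on $L^2(T_{k+l}M)$'' so that $A_{k,l}^*=A_{k,l}^{-1}$, and that the sign relation $A_{k,l}^{-1}=(-1)^kA_{k,l}$ is applied to the full $(k+l)$-tensor (the $\id^{\otimes l}$ block contributing no sign). Beyond these points the argument is purely formal, inherited from $L_{k,l}=I\circ A_{k,l}$ and $N=I^*I$.
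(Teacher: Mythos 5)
Your proposal is correct and follows essentially the same route as the paper: apply theorem~\ref{thm:linalg}(\ref{item:adjoint}) with $\widetilde{A}=\id$ so that $N_{k,l}=A_{k,l}^*NA_{k,l}$, and then identify $A_{k,l}^*=(-1)^kA_{k,l}$. The paper gets this last identity directly from the antisymmetry $(A_i)^j_m=-(A_i)^m_j$ of the rotation factors, which is just a repackaging of your ``orthogonal, so $A_{k,l}^*=A_{k,l}^{-1}$, and $A_{k,l}^{2}=(-1)^k\id$'' argument.
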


\begin{proof}
By theorem~\ref{thm:linalg} part~(\ref{item:adjoint}) we only need to calculate $(\mathcal{D}^{-1})^*=A_{k, l}^*$.
Now for the matrix representations of~$A_i$ we have that $(A_i)^j_m=-(A_i)^m_j$ for $i=1, \dotso k$ and $(A_i)^j_m=\delta^j_m$ for $i=k+1, \dotso k+l$.
Using this one obtains
\begin{equation}
g_x(A_{k, l}f, h)
=
g_x(f, (-1)^k A_{k, l}h)
\end{equation}
and thus
\begin{equation}
\sis{A_{k, l}f, h}_{L^2(T_mM)}
=
\sis{f, (-1)^k A_{k, l}h}_{L^2(T_mM)}.
\end{equation}
Hence $A_{k, l}^*=(-1)^kA_{k, l}$ which gives the claim.
\end{proof}

The next estimates are direct consequences of the results in \cite{PS-sharp-stability-nonpositive-curvature,STE-sharp-stability-estimate-2-tensors,  SU-stability-tensor-fields-boundary-rigidity}.
We denote by~$\Sol(T_mM)$ the set of solenoidal tensor fields.
For the definition of the tangential norm $\norm{\cdot}_{H_T^{1/2}(\partial_{\mathrm{in}} SM)}$ see~\cite{PS-sharp-stability-nonpositive-curvature}. 

\begin{cor}
\label{cor:stability}
For any compact simple surface $(M,g)$ and nonnegative integers~$k$ and~$l$ there is a constant $C > 0$ so that:
\smallskip
\begin{enumerate}[(a)]
    \item\label{item:simple1}
    Let $k+l=1$.
    Let~$g$ be extended to a simple metric in $M_1\supset\supset M$.
    Then the estimate
    \begin{align}
    \norm{f}_{L^2(T_1M)}/C&\leq\norm{N_{k, l}f}_{H^1(T_1M_1)}\leq C\norm{f}_{L^2(T_1M)}
    \end{align}
    holds for all $f\in A_{k, l}^{-1}(\Sol(T_1M))\cap L^2(T_1M)$.
    \medskip
    \item\label{item:simple2}
    Let $k+l=2$.
    Let~$g$ be extended to a simple metric in $M_1\supset\supset M$.
    Then the estimate
    \begin{align}
    \norm{f}_{L^2(T_2M)}/C&\leq\norm{N_{k, l}f}_{H^1(T_2M_1)}\leq C\norm{f}_{L^2(T_2M)}
    \end{align}
    holds for all $f\in A_{k, l}^{-1}(\Sol(T_2M)\cap H^1(S_2M))$.
    \medskip
    \item\label{item:simple3}\label{item:sharpestimate}
    Let $m\coloneqq k+l\geq 1$.
    Assume further that $(M, g)$ has non-positive sectional curvature.
    Then the estimate 
    \begin{equation}
    \norm{f}_{L^2(T_mM)}\leq C\norm{L_{k, l}f}_{H^{1/2}_T(\partial_{\mathrm{in}}SM)}
    \end{equation}
    holds for all $f\in A_{k, l}^{-1}(\Sol(T_mM)\cap H^1(S_mM))$.
\end{enumerate}
\end{cor}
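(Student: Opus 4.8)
The plan is to reduce each of the three estimates to the corresponding known estimate for the geodesic ray transform by conjugating with the mixing $A_{k,l}$ and exploiting that $A_{k,l}$ acts isometrically on the relevant Sobolev spaces, by Lemma~\ref{lemma:boundednessofmixedray}(\ref{item:mixedisometry}).

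For parts~(\ref{item:simple1}) and~(\ref{item:simple2}) I would start from Lemma~\ref{lemma:normaloperator}, which gives $N_{k,l}=(-1)^kA_{k,l}NA_{k,l}$, where $N$ is the normal operator of the geodesic ray transform on $(k+l)$-tensor fields formed with a simple extension of~$g$ to~$M_1$. Since a compact simple surface is orientable, $A_{k,l}$ is well defined on~$M_1$, and by Lemma~\ref{lemma:boundednessofmixedray}(\ref{item:mixedisometry}) applied over~$M_1$ it is an isometry of $H^1(T_mM_1)$; hence $\norm{N_{k,l}f}_{H^1(T_mM_1)}=\norm{N(A_{k,l}f)}_{H^1(T_mM_1)}$. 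The hypothesis $f\in A_{k,l}^{-1}(\Sol(T_mM)\cap H^1(S_mM))$ (respectively $f\in A_{k,l}^{-1}(\Sol(T_1M))\cap L^2(T_1M)$ when $m=1$) says precisely that $A_{k,l}f$ lies in the class for which the two-sided estimate of~\cite{STE-sharp-stability-estimate-2-tensors,SU-stability-tensor-fields-boundary-rigidity} holds, so
\[
\norm{A_{k,l}f}_{L^2(T_mM)}/C\leq\norm{N(A_{k,l}f)}_{H^1(T_mM_1)}\leq C\norm{A_{k,l}f}_{L^2(T_mM)}.
\]
Applying Lemma~\ref{lemma:boundednessofmixedray}(\ref{item:mixedisometry}) once more, this time on $L^2(T_mM)$, replaces $\norm{A_{k,l}f}_{L^2(T_mM)}$ by $\norm{f}_{L^2(T_mM)}$ and gives the claim. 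Equivalently, the lower bound is Theorem~\ref{thm:linalg}(\ref{item:stabilitynormaloperators}) with $\widetilde A=\id$, $\mathcal{D}=A_{k,l}^{-1}$, $E_2=L^2(T_mM)$, $E_3=H^1(T_mM_1)$, where the relevant operator norms are all equal to~$1$ because $A_{k,l}$ (and $A_{k,l}^{-1}=(-1)^kA_{k,l}$) is an isometry of the Sobolev spaces involved; the upper bound then follows from the same identity together with the mapping bound $N\colon L^2(T_mM)\to H^1(T_mM_1)$ furnished by the cited stability results.

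For part~(\ref{item:sharpestimate}) I would run the argument at the level of the transform itself. Since $L_{k,l}=I\circ A_{k,l}$, for $f\in A_{k,l}^{-1}(\Sol(T_mM)\cap H^1(S_mM))$ the tensor field $A_{k,l}f$ is solenoidal and lies in $H^1(S_mM)$, so the sharp boundary stability estimate of~\cite{PS-sharp-stability-nonpositive-curvature} for the geodesic ray transform on compact simple surfaces of non-positive sectional curvature yields $\norm{A_{k,l}f}_{L^2(T_mM)}\leq C\norm{I(A_{k,l}f)}_{H^{1/2}_T(\partial_{\mathrm{in}}SM)}=C\norm{L_{k,l}f}_{H^{1/2}_T(\partial_{\mathrm{in}}SM)}$; the isometry property of $A_{k,l}$ on $L^2(T_mM)$ then turns the left-hand side into $\norm{f}_{L^2(T_mM)}$. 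This is precisely Theorem~\ref{thm:linalg}(\ref{item:stabilitygeodesictransform}) with $\widetilde A=\id$, $\mathcal{D}=A_{k,l}^{-1}$, $E_1=L^2(T_mM)$, $Z=H^{1/2}_T(\partial_{\mathrm{in}}SM)$ and $\norm{\mathcal{D}}_{E_1}=1$.

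The substance of the proof is already contained in Theorem~\ref{thm:linalg}, Lemma~\ref{lemma:normaloperator} and Lemma~\ref{lemma:boundednessofmixedray}; the only point that needs attention — and the nearest thing to an obstacle — is bookkeeping: checking that the classes of tensor fields in the cited geodesic-ray-transform estimates match exactly the classes $A_{k,l}^{-1}(\cdots)$ appearing in the statement (which holds because $A_{k,l}$ is a bijection of tensor fields), and that $A_{k,l}$ genuinely extends to an isometry of $H^1(T_mM_1)$ over the enlarged simple manifold~$M_1$, which is legitimate precisely because simple surfaces are orientable. No analytic input beyond the cited estimates is required.
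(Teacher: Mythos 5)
Your proposal is correct and follows essentially the same route as the paper: both reduce each estimate to the cited geodesic-ray-transform results via Theorem~\ref{thm:linalg}(\ref{item:stabilitygeodesictransform}) and (\ref{item:stabilitynormaloperators}), using Lemma~\ref{lemma:normaloperator} and the isometry property from Lemma~\ref{lemma:boundednessofmixedray}(\ref{item:mixedisometry}). If anything, you are slightly more explicit than the paper about where the upper bound $\norm{N_{k,l}f}_{H^1}\leq C\norm{f}_{L^2}$ comes from and about the extension of $A_{k,l}$ to the enlarged manifold $M_1$.
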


\begin{proof}
(\ref{item:simple1}) We know that the stability estimate holds for the geodesic ray transform \cite[Theorem 4]{SU-stability-tensor-fields-boundary-rigidity}.
Now $A_{k, l}\colon L^2(T_1M)\rightarrow L^2(T_1M)$ and $A_{k, l}^*=A^{-1}_{k, l}\colon H^1(T_1M_1)\rightarrow H^1(T_1 M_1)$ are isometries by lemma~\ref{lemma:boundednessofmixedray} part~(\ref{item:mixedisometry}).
By theorem~\ref{thm:linalg} part~(\ref{item:stabilitynormaloperators}) we obtain
\begin{align}
\norm{f}_{L^2(T_1M)}/C&\leq\norm{N_{k, l}f}_{H^1(T_1M_1)}\leq C\norm{f}_{L^2(T_1M)}.
\end{align}

(\ref{item:simple2})
By \cite[Theorem 1]{STE-sharp-stability-estimate-2-tensors} the stability estimate holds for the geodesic ray transform if we know s-injectivity.
But s-injectivity holds on two-dimensional simple manifolds for tensor fields of all order \cite[Theorem 1.1]{PSU-tensor-tomography-on-simple-surfaces}.
Using the fact that $A_{k, l}\colon L^2(T_2M)\rightarrow L^2(T_2M)$ and $A_{k, l}^*\colon H^1(T_2M)\rightarrow H^1(T_2M)$ are isometries we obtain the stability estimate as in part~(\ref{item:simple1}) above.

(\ref{item:simple3})
We know that the stability estimate is true for the geodesic ray transform \cite[Theorem 1.3]{PS-sharp-stability-nonpositive-curvature}.
Since $A_{k, l}\colon L^2(T_mM)\rightarrow L^2(T_mM)$ is an isometry theorem~\ref{thm:linalg} part~(\ref{item:stabilitygeodesictransform}) implies
\begin{equation}
\norm{f}_{L^2(T_mM)}\leq C\norm{L_{k, l} f}_{H^{1/2}_T(\partial_{\mathrm{in}} SM)}.
\end{equation}
This concludes the proof.
\end{proof}

\begin{remark}
Note that for example the estimate 
\begin{equation}
\norm{f}_{L^2}/C\leq\norm{N_{k, l}f}_{H^1}\leq C\norm{f}_{L^2}
\end{equation}
holds for all $f\in A^{-1}_{k, l}(S^{\prime\prime})$ if and only if the estimate 
\begin{equation}
\norm{h}_{L^2}/C\leq\norm{Nh}_{H^1}\leq C\norm{h}_{L^2}
\end{equation}
holds for all $h\in S^{\prime\prime}$.
This follows since $A_{k, l}\colon H^p(T_mM)\rightarrow H^p(T_mM)$ is an isometry for all $p\in\mathbb{N}$ and $N_{k, l}=(-1)^kA_{k, l}NA_{k, l}$.
Therefore the sets defined in corollary~\ref{cor:stability} are in a sense largest sets where such stability estimates can hold.
A similar sharp stability estimate as in part~(\ref{item:sharpestimate}) of corollary~\ref{cor:stability} can be proved on compact simple surfaces when $m=1, 2$ \cite[Theorem 1.1]{AS-sharp-estimates-for-simple-manifolds} (see also~\cite{BS-stability-estimates-in-tensor-tomography} for the Euclidean case).
\end{remark}

\subsection{Transverse ray transform of one-forms}
Next we study the kernel of the transverse ray transform on one-forms in two dimensions. The result which we obtain is previously known in~$\R^2$~\cite{DS-tomography, NA-mathematical-methods-image-reconstruction}.
We recall that in our notation the transverse ray transform is $I_{\perp}f=I_Af$ where $A_i=\star$ for all $i\in\{1,\dotso, m\}$. For a scalar field~$\phi$, we define $\curl(\phi)=e_2(\phi)e^1-e_1(\phi)e^2$ where $\{e_1, e_2\}$ is any positively oriented local orthonormal frame and $\{e^1, e^2\}$ its coframe.

\begin{cor}
\label{cor:transverseinjectivity}
Let $(M, g)$ be two-dimensional orientable Riemannian manifold with boundary such that the geodesic ray transform is s-injective on smooth one-forms and let $f\in C^\infty(T_1M)$.
Then $I_{\perp}f=0$ if and only if $f=\curl(\phi)$ for some smooth function~$\phi$ vanishing on the boundary. 
\end{cor}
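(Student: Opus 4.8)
The plan is to realize the transverse ray transform on one-forms as the mixing ray transform $I_{\perp} = I_{A}$ with the single mixing $A = \star$, that is $A = A_{1,0}$ in the notation of~\eqref{eq:mixedraytwodimensional}, and then to read off the conclusion from the solenoidal injectivity results already obtained. The first observation is that for $m=1$ every one-form is symmetric, so $\ker(\lambda) = \ker(\sigma) = \{0\}$ by proposition~\ref{prop:V1V2V3}; hence $\widehat{\sigma}_{A} = \id$ and the natural derivative is $\nabla^{A} = A^{-1}\circ\nabla = \star^{-1}\circ\nabla$. Since the geodesic ray transform is s-injective on one-forms by hypothesis, corollary~\ref{cor:sinjectivitymixed} with $k=1$ and $l=0$ (equivalently corollary~\ref{cor:generalsinjectivity}) yields that $I_{\perp}f = 0$ if and only if $f = \star^{-1}\nabla h$ for some smooth scalar field $h \in C^{\infty}(S_0M) = C^{\infty}(M)$ with $h|_{\partial M} = 0$.

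What remains is a purely algebraic identification of $\star^{-1}\nabla h$ with $\curl(h)$. In a positively oriented local orthonormal coframe $\{e^1, e^2\}$ one has $\nabla h = e_1(h)\,e^1 + e_2(h)\,e^2$, while the rotation $\star$ acts on one-forms by $\star(\omega_1 e^1 + \omega_2 e^2) = -\omega_2 e^1 + \omega_1 e^2$, so $\star^2 = -\id$ and therefore $\star^{-1} = -\star$. Consequently $\star^{-1}\nabla h = -\star\nabla h = e_2(h)\,e^1 - e_1(h)\,e^2 = \curl(h)$, which gives the forward implication with $\phi = h$. For the converse I would run the same computation in reverse: if $f = \curl(\phi)$ with $\phi|_{\partial M} = 0$, then, using $\curl(\phi) = -\star\nabla\phi$, we get $\star f = -\star^2\nabla\phi = \nabla\phi$, hence $I_{\perp}f = I(\star f) = I(\nabla\phi) = I(\sigma\nabla\phi) = 0$ by the fundamental theorem of calculus since $\phi$ vanishes on $\partial M$.

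There is no real obstacle here: the entire analytic content --- the reduction to solenoidal injectivity of the ordinary geodesic ray transform --- is carried by theorem~\ref{thm:linalg} and its corollaries, and this corollary is essentially a dictionary entry translating the known one-form statement into the transverse setting. The only point that needs a moment of care is the sign bookkeeping together with the identity $\star^2 = -\id$ on one-forms in two dimensions, which is precisely what makes $\nabla^{\star}h = \star^{-1}\nabla h$ equal $\curl(h)$ on the nose rather than up to sign.
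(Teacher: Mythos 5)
Your proof is correct and follows essentially the same route as the paper's: both reduce the claim to s-injectivity of $I$ applied to $\star f$ and then identify $\star^{-1}\der\phi$ with $\curl(\phi)$ by the same computation in a positively oriented orthonormal frame (the sign bookkeeping via $\star^2=-\id$ matches). The only cosmetic difference is that you route through corollary~\ref{cor:sinjectivitymixed}, which as stated assumes $M$ compact, whereas the paper applies the s-injectivity hypothesis to $Af$ directly and so needs no compactness; your argument works verbatim with that direct application instead.
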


\begin{proof}
If $f=\curl(\phi)$ where~$\phi$ vanishes on the boundary, then $Af=\der\phi$ and $I_{\perp}f=I(Af)=0$ by the fundamental theorem of calculus.
For the converse, if $I_{\perp}f=0$, then $I(Af)=0$.
By solenoidal injectivity we have that $Af=\der\phi$ for some smooth scalar function~$\phi$ vanishing on the boundary~$\partial M$.
This implies that $f=A^{-1}\der\phi$ which in local positively oriented orthonormal frame $\{e_1, e_2\}$ means $f_1=e_2(\phi)$ and $f_2=-e_1(\phi)$, i.e. $f=\curl(\phi)$.
\end{proof}

\begin{remark}
We note that on two-dimensional Cartan--Hadamard manifolds one can also deduce from $I_\perp f=0$ that $f=\curl(\phi)$ if one of the following assumptions holds
\begin{enumerate}[(a)]
    \item $-K_0\leq K\leq 0$ for some $K_0>0$ and $f\in E^1_{\eta}(T_1M)$ for some $\eta>\frac{3}{2}\sqrt{K_0}$
    \smallskip
    \item $K\in P_{\kappa}(M)$ for some $\kappa>2$ and $f\in P^1_{\eta}(T_1M)$ for some $\eta>2$.
\end{enumerate}
\end{remark}

If we combine the data from the geodesic ray transform~$If$ and the transverse ray transform~$I_{\perp}f$, we can uniquely reconstruct any smooth one-form on two-dimensional compact simple manifolds.
This result is also known previously in~$\mathbb{R}^2$~\cite{BH-tomographic-reconstruction-vector-fields}.
Recall that $\Delta_g u=\text{div}((\text{grad}(u))$ where $\text{grad}(u)=(\der u)^{\sharp}$.

\begin{cor}
\label{cor:simplesurfaceuniqueness}
Let $(M, g)$ be a compact simple surface.
Then the geodesic ray transform and the transverse ray transform together determine $f\in C^\infty(T_1M)$ uniquely, i.e. if both $If=0$ and $I_{\perp}f=0$, then $f=0$.
\end{cor}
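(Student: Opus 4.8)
The plan is to combine the two kernel characterizations already established in this section. By Corollary~\ref{cor:transverseinjectivity}, the hypothesis $I_\perp f=0$ gives $f=\curl(\phi)$ for some smooth $\phi$ vanishing on $\partial M$. On the other hand, $If=0$ together with s-injectivity of the geodesic ray transform on one-forms (which holds on compact simple surfaces by \cite{PSU-tensor-tomography-on-simple-surfaces}) forces $f$ to be a potential field, $f=\der p$ for some smooth function $p$ with $p|_{\partial M}=0$. So the task reduces to showing that a one-form that is simultaneously of the form $\der p$ and $\curl(\phi)$, with both scalar potentials vanishing on the boundary, must be zero.

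First I would write out what it means for $\der p=\curl(\phi)$ in a positively oriented local orthonormal frame $\{e_1,e_2\}$: this reads $e_1(p)=e_2(\phi)$ and $e_2(p)=-e_1(\phi)$, i.e. $\der p=\star\,\der\phi$ (using $\sharp\star\flat=\star$). Applying the divergence to $\der p$ and recalling $\Delta_g p=\operatorname{div}(\operatorname{grad}(p))$, I would note that $\operatorname{div}(\star\,\der\phi)=0$ because $\star\,\der\phi$ is (up to sign and the musical isomorphism) the Hodge codifferential-free rotated gradient, whose divergence vanishes identically on an orientable surface. Hence $\Delta_g p=0$ in the interior of $M$, and since $p|_{\partial M}=0$, the maximum principle (or an integration by parts: $\int_M|\operatorname{grad} p|^2\,\der V_g=\int_M p\,\Delta_g p\,\der V_g - \int_{\partial M}p\,\partial_\nu p=0$) yields $p\equiv 0$, hence $f=\der p=0$.

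The main obstacle is bookkeeping rather than conceptual: verifying cleanly that $\operatorname{div}(\star\,\der\phi)=0$ and that the divergence computed in the frame matches $\Delta_g$, and making sure the boundary term in the integration by parts vanishes because $p|_{\partial M}=0$ (no condition on the normal derivative is needed). One alternative I would keep in reserve, avoiding any PDE argument, is purely algebraic: from $f=\der p$ and $f=\curl(\phi)=\star\,\der\phi$ we get $\der p=\star\,\der\phi$; applying $\star$ once more and using $\star^2=-\id$ on one-forms gives $\star\,\der p=-\der\phi$, so that $\phi$ and $p$ are (locally) harmonic conjugates, and the same boundary argument applies. Either route finishes the proof.

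\begin{proof}
Suppose $If=0$ and $I_\perp f=0$. Since $(M,g)$ is a compact simple surface, the geodesic ray transform is s-injective on one-forms \cite{PSU-tensor-tomography-on-simple-surfaces}, so $If=0$ implies $f=\der p$ for some $p\in C^\infty(M)$ with $p|_{\partial M}=0$. By corollary~\ref{cor:transverseinjectivity}, $I_\perp f=0$ implies $f=\curl(\phi)$ for some $\phi\in C^\infty(M)$ with $\phi|_{\partial M}=0$. In a positively oriented local orthonormal frame $\{e_1,e_2\}$ the identity $\der p=\curl(\phi)$ reads $e_1(p)=e_2(\phi)$ and $e_2(p)=-e_1(\phi)$, that is, $\der p=\star\,\der\phi$ in the notation $\sharp\star\flat=\star$. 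Consequently $\operatorname{grad}(p)=\star\operatorname{grad}(\phi)$, and since rotating a gradient field by $90$ degrees produces a divergence-free field on an orientable surface, $\Delta_g p=\operatorname{div}(\operatorname{grad}(p))=\operatorname{div}(\star\operatorname{grad}(\phi))=0$ in the interior of $M$. Integrating by parts and using $p|_{\partial M}=0$,
\begin{equation}
\int_M\abs{\operatorname{grad}(p)}_{g_x}^2\,\der V_g(x)
=
-\int_M p\,\Delta_g p\,\der V_g(x)
=
0,
\end{equation}
so $\operatorname{grad}(p)\equiv0$ and hence $f=\der p=0$.
\end{proof}
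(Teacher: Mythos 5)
Your proof is correct and follows essentially the same route as the paper: s-injectivity of $I$ gives $f=\der p$ with $p|_{\partial M}=0$, corollary~\ref{cor:transverseinjectivity} gives $f=\curl(\phi)$ and hence $\operatorname{div}(f)=0$, so $p$ is harmonic with zero boundary values and therefore constant. You merely make explicit the integration-by-parts step that the paper leaves implicit, and the argument is sound.
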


\begin{proof}
Since $(M, g)$ is simple, the solenoidal injectivity of~$I$ (see~\cite{PSU-tensor-tomography-on-simple-surfaces}) implies that $f=\der u$ for some smooth function~$u$ vanishing on the boundary.
On the other hand, $I_{\perp}f=0$ gives that $f=\curl(\phi)$ for some smooth scalar function~$\phi$ by corollary~\ref{cor:transverseinjectivity}.
But this implies that $\text{div}(f)=0$.
Therefore $\Delta_g u=\text{div}(f)=0$ so~$u$ is a harmonic function vanishing on the boundary.
We obtain $u=0$ and hence $f=0$.
\end{proof}

\begin{remark}
One could also use solenoidal decomposition to prove the previous corollary.
By the solenoidal decomposition $f=\sol{f}+\der u$.
Now $If=0$ implies that $\sol{f}=0$.
On the other hand, $I_{\perp}f=0$ implies that~$f$ is solenoidal, i.e. $f=\sol{f}=0$.
\end{remark}

The previous corollary holds also on two-dimensional Cartan--Hadamard manifolds as we will prove next.
We first state and prove a version of Liouville's theorem on Cartan--Hadamard manifolds.

\begin{lem}
\label{lemma:cartanharmonic}
Let $(M, g)$ be a two-dimensional Cartan--Hadamard manifold and~$u$ harmonic function on~$M$, i.e. $\Delta_g u=0$.
Fix any point $o\in M$.
Assume that one of the following conditions hold:
\begin{enumerate}[(a)]
\item\label{item:harmonic1} $-K_0\leq K\leq 0$ for some $K_0>0$ and
\begin{equation}
    \abs{u(x)}\abs{\der u(x)}
    \leq
    Ce^{-\eta d(x,o)}
\end{equation}
for some $C>0$ and some $\eta>\sqrt{K_0}$.
\smallskip
\item\label{item:charmonic2} The curvature satisfies
\begin{equation}
\label{eq:K-in-P}
    \abs{K(x)}
    \leq
    C(1+d(x,o))^{-\kappa}
\end{equation}
for some $C>0$ and $\kappa>2$ and the function satisfies
\begin{equation}
    \abs{u(x)}\abs{\der u(x)}
    \leq
    C(1+d(x,o))^{-\eta}
\end{equation}
for some $\eta>1$.
\end{enumerate}
Then~$u$ is constant.
\end{lem}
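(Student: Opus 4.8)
The plan is to prove that a harmonic function on a two-dimensional Cartan--Hadamard manifold with the stated decay must be constant, by integrating the identity $\Delta_g(u^2) = 2u\Delta_g u + 2\abs{\der u}^2 = 2\abs{\der u}^2$ over geodesic balls and using the decay of $u\,\der u$ to kill the boundary term, forcing $\der u \equiv 0$. First I would fix the reference point $o$ and write $B_R = B_R(o)$ for the geodesic ball of radius $R$, with geodesic sphere $S_R = \partial B_R$. By the divergence theorem on $B_R$,
\begin{equation}
\int_{B_R} \Delta_g(u^2)\,\der V_g = \int_{S_R} \partial_\nu(u^2)\,\der S = 2\int_{S_R} u\,\partial_\nu u\,\der S,
\end{equation}
where $\nu$ is the outward unit normal. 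Since the left side equals $2\int_{B_R}\abs{\der u}^2_{g_x}\,\der V_g$, it suffices to show the right side tends to $0$ along a sequence $R_j\to\infty$; then $\int_M \abs{\der u}^2\,\der V_g = 0$, so $\der u \equiv 0$ and $u$ is constant ($M$ being connected).

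Next I would bound the boundary term using $\abs{u\,\partial_\nu u}\le \abs{u}\abs{\der u}$ pointwise, which by hypothesis is at most $Ce^{-\eta d(x,o)} = Ce^{-\eta R}$ on $S_R$ in case~(\ref{item:harmonic1}), and at most $C(1+R)^{-\eta}$ in case~(\ref{item:charmonic2}). Thus
\begin{equation}
\Big|\,2\int_{S_R} u\,\partial_\nu u\,\der S\,\Big| \le 2C\,e^{-\eta R}\,\abs{S_R} \quad\text{resp.}\quad 2C\,(1+R)^{-\eta}\,\abs{S_R},
\end{equation}
so everything reduces to an upper bound on the area $\abs{S_R}$ of the geodesic sphere. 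Here I invoke comparison geometry: on a Cartan--Hadamard manifold the exponential map at $o$ is a diffeomorphism, and by the Bishop--Gromov / Günther-type volume comparison the spherical area element is controlled by the model space of constant curvature equal to the lower curvature bound. In case~(\ref{item:harmonic1}), $K\ge -K_0$ gives $\abs{S_R} \le C' \sinh^{n-1}(\sqrt{K_0}\,R)$ with $n=2$, i.e.\ $\abs{S_R} \le C' \sinh(\sqrt{K_0}\,R) \le C'' e^{\sqrt{K_0}\,R}$; since $\eta > \sqrt{K_0}$, the product $e^{-\eta R}\abs{S_R} \le C'' e^{(\sqrt{K_0}-\eta)R} \to 0$. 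In case~(\ref{item:charmonic2}), the decay $\abs{K(x)}\le C(1+d(x,o))^{-\kappa}$ with $\kappa>2$ forces the manifold to have at most polynomial (in fact Euclidean-type) volume growth of the spheres, $\abs{S_R} \le C' R^{n-1} = C' R$ for large $R$, so $(1+R)^{-\eta}\abs{S_R} \le C'' R^{1-\eta}\to 0$ since $\eta>1$. Either way we get a sequence $R_j\to\infty$ (indeed the limit exists) along which the boundary term vanishes.

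The main obstacle is the volume-comparison input in case~(\ref{item:charmonic2}): deducing that a pinched-curvature-decay condition $\abs{K}\le C(1+d(\cdot,o))^{-\kappa}$ with $\kappa>2$ yields linear growth $\abs{S_R}=O(R)$ for the geodesic spheres. This is where one needs a Jacobi-field estimate: writing the area element along a unit-speed geodesic ray as a solution of the scalar Jacobi equation $\ddot{J} + K\,J = 0$ with $J(0)=0$, $\dot J(0)=1$, the integrability $\int_0^\infty t\abs{K(\gamma(t))}\,\der t < \infty$ (which follows from $\kappa>2$) is exactly the classical condition guaranteeing that $J(t)/t$ stays bounded above and below, hence $\abs{S_R} \asymp R$. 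I would either cite this standard fact from comparison geometry or include a short self-contained argument via the Riccati comparison for $\dot J/J$. Everything else — the divergence theorem on geodesic balls (valid since $\exp_o$ is a global diffeomorphism and $S_R$ is smooth), the pointwise bound, and the final conclusion $u \equiv \mathrm{const}$ — is routine. One should also note that $u$ being harmonic and $C^1$ with the stated bounds is automatically smooth by elliptic regularity, so $\der u$ and the computation $\Delta_g(u^2)=2\abs{\der u}^2$ make sense.
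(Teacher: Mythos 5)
Your proposal is correct and follows essentially the same route as the paper: Green's identity over geodesic balls $B_R(o)$, the pointwise bound $\abs{u\,\partial_\nu u}\leq\abs{u}\abs{\der u}$ on the flux term, and the Jacobian/area estimates $\abs{S_R}\lesssim e^{\sqrt{K_0}R}$ (case (a)) and $\abs{S_R}\lesssim R$ (case (b)), which the paper simply cites from Lemma 4.7 of the Cartan--Hadamard reference rather than rederiving via Jacobi or Riccati comparison. Writing the identity through $\Delta_g(u^2)$ instead of integrating $u\Delta_g u$ directly is a cosmetic difference.
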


We point out that the conditions above are independent of the choice of $o\in M$ as in the definition of the spaces in~\eqref{eq:cartanhadamardspaces}.
Moving the point will only change the constants.

\begin{proof}
Assume first that~(\ref{item:harmonic1}) holds.
Let~$B_r(o)$ be the geodesic ball of radius $r>0$ centered at~$o$.
Using the integration by parts formula (see~\cite{LEE-riemannian-manifolds}) we obtain
\begin{equation}
\begin{split}
0
&=
\int_M u\Delta u\der V_g
\\&=
\lim_{r\rightarrow\infty}\int_{B_r(o)}u\Delta u \der V_g
\\&=
\lim_{r\rightarrow\infty}\bigg(-\int_{B_r(o)}\abs{\text{grad}(u)}_{g_x}^2\der V_g+\int_{S_r(o)}u N(u)\der\widehat{V}_g\bigg)
\end{split}
\end{equation}
where $\der\widehat{V}_g$ is the induced volume form on the geodesic sphere $S_r(o)=\partial B_r(o)$ and~$N$ is the outward unit normal vector field.
We focus on the second term.
Since $N(u)=g_x(\text{grad}(u), N)$ and $\abs{\text{grad}(u)}_{g_x}=\abs{\der u}_{g_x}$, we can estimate that $\abs{u N(u)}\leq\abs{u} \abs{\text{grad}(u)}_{g_x}=\abs{u}\abs{\der u}_{g_x}$.
The volume form can be expressed in polar coordinates as $\der\widehat{V}_g=J_o(r, \theta)\der\theta$ where $\abs{J_o(r, \theta)}\leq Ce^{\sqrt{K_0}r}$ \cite[Lemma 4.7]{LE-cartan-hadamard}.
Therefore we obtain
\begin{equation}
\abs{\int_{S_r(o)}u N(u)\der\widehat{V}_g}\leq C^{\prime}e^{(-\eta+\sqrt{K_0})r}\xrightarrow{r\rightarrow\infty} 0.
\end{equation}
This implies $\abs{\der u}_{g_x}=\abs{\text{grad}(u)}_{g_x}=0$ and hence $\der u=0$.
Connectedness of~$M$ implies that~$u$ is constant.

If~(\ref{item:charmonic2}) holds, then $\abs{J_o(r, \theta)}\leq Cr$ \cite[Lemma 4.7]{LE-cartan-hadamard}. The claim is proved identically as in part~(\ref{item:harmonic1}).
\end{proof}

\begin{remark}
One can prove the previous lemma in the exact same way for Cartan--Hadamard manifolds of dimension $n>2$ using the growth estimates for the Jacobi fields proved in~\cite{LRS-tensor-tomography-cartan-hadamard}.
In the condition~(\ref{item:harmonic1}) one requires $\eta>(n-1)\sqrt{K_0}$ and in the condition~(\ref{item:charmonic2}) one requires $\eta>n-1$.
\end{remark}

\begin{cor}
\label{cor:cartanhadamarduniqueness}
Let $(M, g)$ be two-dimensional Cartan--Hadamard manifold.
Assume that one of the following conditions holds:
\begin{enumerate}[(a)]
    \item\label{item:choneform1} $-K_0\leq K\leq 0$ for some $K_0>0$ and $f\in E^1_{\eta}(T_1M)$ for some $\eta>\frac{3}{2}\sqrt{K_0}$.
    \smallskip
    \item\label{item:choneform2} The curvature satisfies the estimate~\eqref{eq:K-in-P} for some $C>0$ and $\kappa>2$ and $f\in P^1_{\eta}(T_1M)$ for some $\eta>2$.
\end{enumerate}
Then the geodesic ray transform and the transverse ray transform together determine the one-form~$f$ uniquely, i.e. if both $If=0$ and $I_{\perp}f=0$, then $f=0$.
\end{cor}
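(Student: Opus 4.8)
I would follow the template of the proof of corollary~\ref{cor:simplesurfaceuniqueness}, replacing its compact inputs by their Cartan--Hadamard analogues: s-injectivity of the geodesic ray transform on one-forms from \cite{LRS-tensor-tomography-cartan-hadamard} (as used in corollary~\ref{cor:cartanhadamardinjectivity}), the kernel description of the transverse ray transform in the remark after corollary~\ref{cor:transverseinjectivity}, and the Liouville-type statement lemma~\ref{lemma:cartanharmonic} in place of the boundary maximum principle for harmonic functions.

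So assume $If=0$ and $I_\perp f=0$, and first produce a potential. From $If=0$ and the decay hypothesis on~$f$ --- condition~(\ref{item:choneform1}) or~(\ref{item:choneform2}) --- s-injectivity gives a function~$u$ with $f=\der u$, and the quantitative parts of \cite[Theorems~1.1 and~1.2]{LRS-tensor-tomography-cartan-hadamard} (see also corollary~\ref{cor:cartanhadamardinjectivity}) give $u\in E_{\eta-\epsilon}(T_0M)$ for every $\epsilon>0$ in case~(\ref{item:choneform1}) and $u\in P_{\eta-1}(T_0M)$ in case~(\ref{item:choneform2}). From $I_\perp f=0$ and the same hypothesis, the remark after corollary~\ref{cor:transverseinjectivity} --- whose assumptions coincide with ours --- gives $f=\curl(\phi)$ for some smooth~$\phi$, and hence $\operatorname{div}(f)=0$, exactly as in the proof of corollary~\ref{cor:simplesurfaceuniqueness}. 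Combining $f=\der u$ with $\operatorname{div}(f)=0$ yields $\Delta_g u=\operatorname{div}(\operatorname{grad}(u))=0$, so~$u$ is harmonic.

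It then remains to verify that~$u$ meets the hypotheses of lemma~\ref{lemma:cartanharmonic}. In case~(\ref{item:choneform1}) one has $\abs{\der u(x)}=\abs{f}_{g_x}\leq Ce^{-\eta d(x,o)}$ and $\abs{u(x)}\leq Ce^{-(\eta-\epsilon)d(x,o)}$, hence $\abs{u(x)}\abs{\der u(x)}\leq C'e^{-(2\eta-\epsilon)d(x,o)}$; since $\eta>\tfrac32\sqrt{K_0}$ one chooses $\epsilon>0$ small enough that $2\eta-\epsilon>\sqrt{K_0}$ and applies lemma~\ref{lemma:cartanharmonic}(\ref{item:harmonic1}). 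In case~(\ref{item:choneform2}) the curvature bound~\eqref{eq:K-in-P} is already assumed and $\abs{u(x)}\abs{\der u(x)}\leq C'(1+d(x,o))^{-(2\eta-1)}$ with $2\eta-1>1$ because $\eta>2$, so lemma~\ref{lemma:cartanharmonic}(\ref{item:charmonic2}) applies. Either way~$u$ is constant, so $f=\der u=0$. The only place that needs attention is this exponent bookkeeping showing $\abs{u}\abs{\der u}$ decays fast enough for the Liouville lemma; it holds with room to spare, the binding constraints $\eta>\tfrac32\sqrt{K_0}$ and $\eta>2$ coming from the s-injectivity and transverse-ray inputs rather than from lemma~\ref{lemma:cartanharmonic}, so I do not expect any real obstacle beyond assembling these three ingredients.
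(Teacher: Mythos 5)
Your proposal is correct and follows essentially the same route as the paper: s-injectivity from \cite{LRS-tensor-tomography-cartan-hadamard} gives $f=\der u$ with the quantitative decay of~$u$, the vanishing of $I_\perp f$ forces $\operatorname{div}(f)=0$ so that~$u$ is harmonic, and lemma~\ref{lemma:cartanharmonic} then gives $u$ constant and $f=0$. The only difference is that you spell out the exponent bookkeeping ($2\eta-\epsilon>\sqrt{K_0}$, resp.\ $2\eta-1>1$) needed to invoke the Liouville lemma, which the paper leaves implicit; this check is correct.
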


\begin{proof}
Assume that~(\ref{item:choneform1}) holds.
The condition $If=0$ implies that $f=\der h$ for some $h\in E_{\eta-\epsilon}(M)$ where $\epsilon>0$ is arbitrary (see~\cite{LRS-tensor-tomography-cartan-hadamard}).
On the other hand, $I_{\perp}f=0$ implies that $\Delta_g h=\text{div}(f)=0$.
Hence~$h$ is harmonic and satisfies the decay estimate in lemma~\ref{lemma:cartanharmonic}.
Thus~$h$ is constant and $f=0$.
The proof under the assumption~(\ref{item:choneform2}) is identical.
\end{proof}

\appendix

\section{Notation}
\label{appendix}

\subsection{Integral transforms}
\begin{itemize}
\item $If$, the geodesic X-ray transform of a tensor field~$f$ of order~$m$. See section~\ref{subsec:geodesicraytransform} and equations~\eqref{eq:grtbdry} and~\eqref{eq:geodesicraycartanhadamard}.
\item $I_{SM}h$, the geodesic ray transform of a function $h\colon SM\rightarrow\mathbb{R}$. See section~\ref{subsec:geodesicraytransform} and equation~\eqref{eq:geodesicrayspherebundle}.
\item $I_{A, r}f$, the (abstract) mixing ray transform with a mixing~$A$ of degree~$m$, operating on a tensor field~$f$ of order~$m$. See section~\ref{sec:mixingraydefinition} and equation~\eqref{eq:abstractmixingraytransform}.
\item $L_{k,l}f=I_{A_{k,l}}f$, the mixed ray transform of a tensor field~$f$ of order $k+l$ on two-dimensional orientable Riemannian manifold. See section~\ref{sec:definitionofmixedray} and equations~\ref{eq:mixedraytwodimensional} and~\ref{eq:mixedraydefinition}.
\item $I_\perp f$, the transverse ray transform of a tensor field~$f$ of order $k$, corresponding to the mixed ray transform with $l=0$. See section~\ref{sec:definitionofmixedray} and equation~\ref{eq:mixedraytwodimensional}.
\item $I^q_{A, r}[f]=I_{A, r}f$, the quotient transform of an equivalence class of tensor field~$f$ of degree~$m$. See section~\ref{sec:mixingraydefinition}.
\item $\mathcal{L}_\beta f$, the light ray transform of a (compactly supported) tensor field of order~$m$. See section~\ref{subsec:lightraytransform} and equation~\eqref{eq:lighraytransform}.
\item $\mathcal{L}_\beta^q [f]=\mathcal{L}_\beta f$, the quotient light ray transform of an equivalence class of a (compactly supported) tensor field~$f$ of degree~$m$. See section~\ref{subsec:lightraytransform}.
\end{itemize}

\subsection{Other operators on tensor fields}

\begin{itemize}
\item $A$, a mixing composed of automorphisms of the tangent bundle. See section~\ref{sec:mixingraydefinition} and equation~\eqref{eq:mixingdefinition}.
\item $A_i$, automorphisms (fiberwise linear bijections) of the tangent bundle. See the beginning of section~\ref{sec:mixingraydefinition}.
\item $\lambda$ and~$\lambda_x$, operators converting $m$-tensor field and $m$-tensor into a function on the tangent bundle and tangent space. See section~\ref{sec:decomp} and equation~\eqref{eq:lambdaoperator}.
\item $\lambda_r=r\circ\lambda$ and $\lambda_{r, x}=r_x\circ\lambda_x$, where $r$ and $r_x$ are the restriction operators on the tangent bundle and tangent space. See section~\ref{sec:decomp}.
\item $A_{k,l}$, the mixing corresponding to the mixed ray transform~$L_{k,l}$. See section~\ref{sec:definitionofmixedray} and equation~\ref{eq:mixedraytwodimensional}.
\item $\sigma$, the usual symmetrization operator of tensor fields. See section~\ref{sec:decomp} and equation~\eqref{eq:sigmadef}.
\item $\widehat{\sigma}_{A, r}$, the projection operator onto~$A^{-1}(\ker(\lambda_r)^\perp)$, related to the mixing ray transform~$I_{A, r}$. See sections~\ref{sec:decomp} and~\ref{sec:mixingraydefinition}, and equations~\eqref{eq:decompositionlambdar} and~\eqref{eq:decompositionlambdarandA}.
\item $\mathcal{H}=\id-\widehat{\sigma}_{A, r}$, an operator projecting $m$-tensor field onto $\ker(\lambda_r\circ A)$. See sections~\ref{sec:mixingraydefinition} and~\ref{sec:solenoidalinjectivity}, and theorem~\ref{thm:linalg}.
\item $\mathcal{D}=A^{-1}\circ\widetilde{A}$, an auxiliary operator related to two admissible mixings~$A$ and~$\widetilde{A}$ of degree~$m$. See section~\ref{sec:mixingraydefinition} and theorem~\ref{thm:linalg}.
\item $\nabla^A=A^{-1}\circ\nabla$, the weighted covariant derivative of a $m$-tensor field where~$A$ is an admissible mixing of degree~$m$. See section~\ref{subsubsection:generalsinjectivity}.
\item $N_{k, l}$, the normal operator of the mixed ray transform~$L_{k, l}$ on compact simple surfaces. See section~\ref{sec:stability} and lemma~\ref{lemma:normaloperator}.
\end{itemize}

\subsection{Other}

\begin{itemize}
\item $\mathcal{F}(X)$, the set of all functions $X\to\C$.
\item $M$ or $(M, g)$, a connected (pseudo-) Riemannian manifold of dimension $n\geq 2$.
\item $SM$, the sphere bundle whose fibers are unit spheres of the tangent spaces. See section~\ref{subsec:geodesicraytransform}.
\item $\X(T_mM)$, the space of all covariant $m$-tensor fields. See section~\ref{sec:notation}.
\item $S_mM$, the space of symmetric $m$-tensor fields. See sections~\ref{sec:notation} and~\ref{sec:decomp}.
\item $C^q(T_mM)$ and $C^q(S_mM)$, the set of $C^q$-smooth (symmetric) $m$-tensor fields where $q\in\mathbb{N}$. See section~\ref{sec:notation}.
\item $H^k(T_mM)$ and $H^k(S_mM)$, the $L^2$-Sobolev space of (symmetric) $m$-tensor field where $k\in\mathbb{N}$. See section~\ref{sec:sobolevnorms}.
\item $P_\eta(T_mM)$ and $P^1_\eta(T_mM)$, the spaces of polynomially decaying $m$-tensor fields on Cartan--Hadamard manifolds. See section~\ref{subsec:geodesicraytransform} and equation~\eqref{eq:cartanhadamardspaces}.
\item $E_\eta(T_mM)$ and $E^1_\eta(T_mM)$, the spaces of exponentially decaying $m$-tensor fields on Cartan--Hadamard manifolds. See section~\ref{subsec:geodesicraytransform} and equation~\eqref{eq:cartanhadamardspaces}.
\item $[f]$ and~$[f]_A$, the equivalence class of the tensor field~$f$, under the relation $f\sim h$ if and only if $f-h\in\ker(\lambda_r\circ A)$. See sections~\ref{sec:decomp}, \ref{sec:mixingraydefinition}, ~\ref{subsubsection:generalsinjectivity} and~\ref{subsec:lightraytransform}.
\end{itemize}

\def\bibfont{\footnotesize}
\bibliographystyle{abbrv}
\bibliography{sample}

\end{document}